\newtheorem{definition}{Definition}
\newtheorem{theorem}{Theorem}
\newtheorem{lemma}{Lemma}
\newtheorem{proposition}{Proposition}
\newenvironment{proof}{\paragraph{Proof:}}{\hfill$\Box$}
\begin{document}
\title{The expanding search ratio of a graph}

\author{%
Spyros Angelopoulos%
\thanks{Sorbonne Universit\'es, UPMC Univ Paris 06, CNRS, LIP6, Paris, France}
\and
Christoph D\"urr%
\footnotemark[1]
\and
Thomas Lidbetter%
\thanks{Department of Management Science and Information Systems, Rutgers Business School, Newark, NJ, USA}
}

\maketitle

\begin{abstract}
    We study the problem of searching for a hidden target in an environment that is modeled by an edge-weighted graph. A sequence of edges is chosen starting from a given {\em root} vertex such that each edge is adjacent to a previously chosen edge. This search paradigm, known as {\em expanding search} was recently introduced by \cite{AL:expanding} for modeling problems such as searching for coal or minesweeping in which the cost of re-exploration is negligible. It can also be used to model a team of searchers successively splitting up in the search for a hidden adversary or explosive device, for example. We define the {\em search ratio} of an expanding search as the maximum over all vertices of the ratio of the time taken to reach the vertex and the shortest-path cost to it from the root.
    This can be interpreted as a measure of the multiplicative regret incurred in searching, and similar objectives have previously been studied in the context of conventional (pathwise) search.
    In this paper we address algorithmic and computational issues of minimizing the search ratio over all expanding searches, for a variety of search environments,
    including general graphs, trees and star-like graphs.  Our main results focus on the problem of finding the {\em randomized expanding search} with minimum {\em expected search ratio}, which is equivalent to solving a zero-sum game between a {\em Searcher} and a {\em Hider}. We solve these problems for certain classes of graphs, and obtain constant-factor approximations for others.
\end{abstract}




\section{Introduction}

We consider the problem faced by a {\em Searcher} of locating a stationary target or {\em Hider} located at a vertex of a connected edge-weighted graph $G$. We interpret the weight of an edge as the time taken to search that edge. The search must start at a given vertex $O$ called the {\em root} and consists of a sequence of edges chosen in such a way that every edge must be adjacent to some previous edge, so that the set of edges that have been searched at any point forms a connected subgraph of $G$. For a given search and a given vertex $v$ at which the Hider is located, the {\em search time} of $v$ is the time taken to search all the edges up to and including the first edge that is incident to $v$.

This paradigm of search, recently introduced by \cite{AL:expanding} is known as {\em expanding search}, in contrast to the more usual search paradigm, referred to here as {\em pathwise search} in which a search corresponds to a walk in a graph. The expanding search paradigm is an appropriate model for situations in which the cost of ``re-exploration'' is negligible compared to the cost of searching, for example when mining coal: here digging into a new site is far more costly than moving the drill through an area that has already been dug. Another situation to which this principle applies is securing a dangerous area from hidden explosives; once the area is deemed clear, the searchers can navigate through it at a much lower cost. An alternative interpretation, described in detail in \cite{AL:expanding}, is that of a team of searchers splitting up in the search for a target.

We illustrate the concept of expanding search on a graph with an example. Consider the graph $G$ depicted in Figure \ref{fig:example} with root $O$; vertices $A$, $B$, $C$ and $D$; and edges $OA$, $OB$, $BC$ and $BD$ of lengths $3$, $2$, $2$ and $1$, respectively. An example of an expanding search on $G$, which we will denote by $S$, is the sequence of edges, $OB$, $OA$, $BD$, $BC$. Under $S$, the search time of vertex $D$ is $2+3+1=6$.
\begin{figure}
    \centerline{\includegraphics[width=0.4\textwidth]{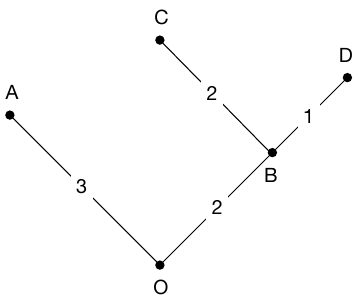}}
    \caption{The graph $G$.}
    \label{fig:example}
\end{figure}

\cite{AL:expanding} take the approach of seeking randomized search strategies that minimize the
expected search time in the worst case: that is, the maximum expected search time over all vertices. They also consider the problem of determining the search that minimizes some weighted average of the search times of the vertices. In this paper, we take an alternative approach by considering a normalized version of the search time obtained by dividing the search time of a vertex $v$ by the length of the shortest path from $O$ to $v$. For example, in the graph $G$ depicted in Figure \ref{fig:example}, the normalized search time under $S$ of vertex $D$ is $6/3 = 2$ since $D$
is at a distance of $2+1=3$ from the root. The maximum the normalized search time takes over all vertices of the graph is called the {\em search ratio}. In $G$, the normalized search time of $S$ is maximized at $B$, where it is equal to $(3+2)/2=2.5$, so this is the search ratio of $S$. This paper studies the problem of finding an expanding search with minimum search ratio.

Our choice of the search ratio as the objective for expanding search is motivated by earlier work by \cite{koutsoupias:fixed},
who introduced this objective in the context of pathwise search. Their approach is analogous to the competitive analysis of online algorithms, in which the performance of an online algorithm is measured against the performance of an optimal offline algorithm; more precisely, the optimal offline algorithm corresponds to simply taking the shortest path to the target.  As in \cite{koutsoupias:fixed}, we consider not only deterministic searches, but also randomized searches, with the aim of finding the randomized expanding search that minimizes the expected value of the normalized search time in the worst case. Equivalently, we view this problem as a zero-sum game between a Hider who chooses a vertex of the graph and a Searcher who chooses an expanding search. The payoff, which the Hider seeks to maximize and the Searcher to minimize, is the normalized search time. This puts our work in the broader category of {\em search games}, a more general framework for games played between a Hider who chooses a point in some search space and a Searcher who makes some choice of how to navigate through the space with the aim of minimizing a given cost function.

It is worth mentioning that normalized cost formulations very similar to the search ratio
have also been previously studied in the context of searching in unbounded domains
(see, e.g., the early work of \cite{beck:yet.more} on the linear search problem as well as the work of \cite{gal:general}
in the context of star search). In such domains, the Hider can ensure the search time is arbitrarily large by choosing positions
arbitrarily far from the root. This observation motivates the need for normalizing the search cost, which is accomplished by dividing this cost by the
shortest-path cost from the root to the Hider.

In the spirit of the work of \cite{koutsoupias:fixed}, in this paper we focus on computational and algorithmic issues of expanding search. We note that \cite{AL:expanding} follow a purely mathematical approach to analyzing expanding search, with an emphasis on evaluating the value of the corresponding zero-sum games; computational and algorithmic issues are not considered. Table~\ref{table:summary} illustrates
the context of our work with respect to previous work.
We note that the problem of minimizing the average search time of the vertices of a graph assuming the pathwise search formulation is precisely the
well-known problem of minimizing the {\em latency} of a graph, also known as the {\em Traveling Repairman} problem
(see~\cite{DBLP:conf/stoc/BlumCCPRS94,DBLP:journals/mp/GoemansK98,DBLP:journals/siamcomp/AroraK03,DBLP:conf/ipco/Sitters02}
for some representative results on this problem). The problem of choosing the randomized (pathwise) search that minimizes the {\em maximum expected search time} of points of a network was formalized by \cite{Gal:79}, and has been extensively studied, as discussed in Subsection~\ref{sec:related}.

\begin{table}[htb!]
    \centering
    \caption{Previous work and relations between search paradigms and objectives.}
    \label{table:summary}
    \begin{tabular}{l|l|l|l|l|}
    \multicolumn{2}{c}{} &
         \multicolumn{3}{c}{\textbf{Objective}} \\ \cline{2-5}
         && \em{Average search time} & \em{Maximum expected search time} & \em{Search ratio} \\
     \cline{2-5}
     \multirow{2}{1em}{\begin{sideways}\textbf{Paradigm}\end{sideways}} &
          \begin{sideways}\emph{Pathwise}\hspace*{.8em}\end{sideways}
                            & \begin{minipage}[b]{45mm}Min. Latency problem\\ \footnotesize((\cite{DBLP:conf/stoc/BlumCCPRS94})\end{minipage}
                            & \begin{minipage}[b]{47mm}Gal's search game\\  \footnotesize((\cite{Gal:79})\end{minipage}
                            & \begin{minipage}[b]{45mm}Searching a fixed graph\\ \footnotesize((\cite{koutsoupias:fixed})\end{minipage} \\
           \cline{2-5}
           &
            \begin{sideways}\emph{Expanding}\hspace*{.8em}\end{sideways}
                           & \begin{minipage}[b]{45mm}Expanding search\\ \footnotesize(\cite{AL:expanding})\end{minipage}
                           & \begin{minipage}[b]{47mm}Expanding search \\\footnotesize((\cite{AL:expanding})\end{minipage}
                           & \begin{minipage}[b]{45mm}This work\\\footnotesize~\end{minipage}\\
          \cline{2-5}
    \end{tabular}
\end{table}


\subsection{Related Work} \label{sec:related}
Search theory (and its computational counterpart) has a very rich history of
research. We give a summary of some results that are pertinent to this work.

Following the formalization of network search games by \cite{Gal:79} in the framework of pathwise search
with un-normalized search time, the problem has had considerable attention, for example in \cite{Reijnierse-Potters}, \cite{Pavlovic} and \cite{Gal:2001}. In the latter work the solution of the game was found for all {\em weakly Eulerian} networks. Recent variations on Gal's original game include a setting in which the Searcher chooses his own starting point in \cite{DG:arbitrary} and \cite{ABG:arbitrary}, and the setting in which the Hider is restricted to choosing vertices that have search costs in \cite{BK:search-costs13} and \cite{BK:search-costs15}.

Expanding search was introduced by \cite{AL:expanding} in the setting in which the payoff is the total (un-normalized) cost of finding the Hider. Among other results, Alpern and Lidbetter solved the game in the case that the network is either a tree or $2$-edge-connected. This model was extended by \cite{Lidbetter:multiple} to a setting in which the Searcher must locate multiple hidden objects.

Much of the search games literature been purely mathematical, with less emphasis on issues of complexity, a notable exception being the work of \cite{vonStengel}.
The search ratio of pathwise search was studied in \cite{koutsoupias:fixed}, who showed that the problem of computing the optimal search ratio in a given undirected graph is NP-complete (and MAX-SNP hard to approximate). They also gave a search strategy based on repeated executions of depth-first searches with geometrically increasing depths that achieves a constant approximation of the (deterministic) competitive ratio. Similar results can be obtained concerning the {\em randomized competitive ratio} (assuming that the Searcher randomizes over its strategy space).  Connections between graph searching and other
classic optimization problems such as the Traveling Salesman problem and the Minimum Latency problem
were shown by \cite{DBLP:conf/ciac/AusielloLM00}. The setting in which the search graph is
revealed as the search progresses was studied by \cite{fleischer:online}. The latter also addressed connections between searching and exploring an environment, where the latter operation is defined as moving around the environment until all possible hiding positions are ``visible'' (the formal definition of visibility depends on the particular environment).

A specific search environment that has attracted considerable attention in the search literature is
the {\em star-like} environment. More specifically,
in the unbounded variant, the search domain consists of a set of infinite lines which have
a common intersection point (the root of the Searcher); this problem is also known as {\em ray searching}.
Ray searching is a natural generalization of the well-known {\em linear search} problem introduced independently by \cite{beck:ls} and \cite{bellman} (informally called the ``cow-path problem''). Optimal strategies for linear search under the (deterministic) competitive
ratio were first given by \cite{beck:yet.more}.
\cite{gal:minimax} gave optimal strategies for the generalized problem of ray searching, a result that was rediscovered later by computer scientists (see \cite{yates:plane}).
Other related work includes the study of
randomization by \cite{schuierer:randomized} and \cite{ray:2randomized}; multi-Searcher strategies by \cite{alex:robots};
searching with turn cost by \cite{demaine:turn};
the variant in which some probabilistic information on target placement is
known by \cite{jaillet:online} and \cite{informed.cows}; and
the related problem of designing {\em hybrid algorithms} by \cite{hybrid}.

Bounded star search, namely the case in which an upper bound is known on the
distance of the target from the root was studied in  \cite{ultimate} and \cite{revisiting:esa}.
New performance measures that are applicable in the context of multi-target searching were introduced
by \cite{hyperbolic} and \cite{oil}
(i.e., the setting in which there are more than one Hider and the Searcher must locate one of them).
The problem of locating a certain number among the many Hiders was studied by \cite{multi-target}.

It must be emphasized that star search has applications that are not necessarily
confined to the concept of locating a target (which explains its significance and popularity).
Indeed star search offers
an abstraction that applies naturally in settings in which we seek an intelligent
allocation of resources to tasks. More precisely,
it captures decision-making aspects when the objective is
to successfully complete at least one task,
without knowing in advance the completion time of each task. Some concrete applications
include: drilling for oil in a number of different locations in \cite{oil}; the
design of efficient {\em interruptible} algorithms, i.e., algorithms that return acceptable solutions even if interrupted during their
execution in \cite{BPZF.2002.scheduling} and \cite{spyros:ijcai15}; and database query optimization
(in particular, pipelined filter ordering in \cite{Condon:2009:ADA:1497290.1497300}). We discuss the latter work in more detail in Section~\ref{sec:star}.


\subsection{Contribution}
In this work we study expanding search under the search-ratio measure,
assuming a variety of search spaces such as stars, trees, and general edge-weighted, undirected graphs.
Our main motivation is to explore how the transition from pathwise to expanding search affects the
deterministic and the randomized search ratios.

We begin in Section~\ref{sec:preliminaries} with the definitions of the
(expanding) search ratio and randomized search ratio. In Section \ref{sec:general.discrete} we show that the problem of finding the optimal (deterministic) search ratio is NP-hard (using a substantially more complicated reduction than for pathwise search in \cite{koutsoupias:fixed}). Applying well-known iterative deepening techniques, we obtain a $4\ln(4) \approx 5.55$ approximation.

Our main technical results, presented in Section \ref{sec:trees}, apply to the setting where the graph is an unweighted graph or a
weighted tree. Here, it is easy to show that an optimal deterministic search strategy searches the vertices in non-decreasing order of
distance from the root (and chooses the corresponding edges accordingly).
This strategy is also a $2$-approximation of the randomized search ratio. To see why the randomized search ratio might as little as half of the deterministic search ratio, suppose two vertices are at approximately the same distance from the root. Then it is possible that by using randomization, the expected search times of the vertices can be ``smoothed out'', which may decrease the randomized search ratio. Therefore, we define a randomized search strategy that approximates the randomized search ratio within a factor of $5/4$, representing a significant improvement over the aforementioned $2$-approximation. The idea of the strategy is to choose a subtree containing nodes within some randomly chosen radius of the root, search it, contract this subtree to the root, and repeat. The method of searching each of these subtrees is by what we call a {\em Random Depth-First Search}, which is an equiprobable choice of a depth-first search $S$ and the depth-first search that arrives at the leaves of the trees in the reverse order to $S$. Thus vertices of the graph at a similar distance to each other are reached at roughly the same time, on average. Improved approximations via randomization are usually not easy to achieve (see, e.g. \cite{koutsoupias:fixed}). Our result confirms the intuitive expectation that randomization has significant benefits.

In Section~\ref{sec:star} we study the problem of bounding the randomized search ratio of a weighted graph, as function of the number of its vertices. 
First, we argue that in the case of a star graph, the setting is equivalent to a problem considered by~\cite{Condon:2009:ADA:1497290.1497300} in the context of pipelined filter ordering in database query optimization (though their problem is not explicitly described as a search game). For this special case, they presented an algorithm for constructing the optimal randomized search strategy and an expression for the randomized search ratio. We show that the results of~\cite{Condon:2009:ADA:1497290.1497300} imply that the randomized search ratio of a star of $n$ vertices plus the root cannot exceed $(n+1)/2$; furthermore, we show that the same result applies to general weighted graphs. 
We note, however, that the result of~\cite{Condon:2009:ADA:1497290.1497300} follows from a complicated flow-based algorithm, which does not readily offer intuition about why the uniform star has the maximum randomized search ratio. For this reason, we provide an alternative proof of the upper bound in star graphs (which we obtained independently in the conference version of the paper~\cite{stacs-expanding}), and which is based on the analysis of a simple, intuitive search strategy using game-theoretic techniques. 

As argued earlier, star-search problems have applications that transcend searching. This is indeed the case in expanding search. Consider the following problem: we are given a collection of $n$ boxes, among which only one contains a prize. We can open a box $i$ at cost $d_i$. We seek a (randomized) strategy for locating the prize, and the randomized search ratio of the strategy is the total expected cost of all opened boxes, divided by the cost of the box that holds the prize. This problem is equivalent to the problem of finding the (randomized) search ratio of a star graph.

We may also interpret expanding search on a tree as the scheduling of jobs with precedence constraints, where each vertex $v$ of the tree corresponds to a job whose processing time is the length of the edge immediately preceding $v$, and all jobs on the path from $v$ to the root must be executed before $v$ can be executed. An expanding search corresponds to a feasible schedule, and the distance of $v$ from the root corresponds to the minimum possible completion time of $v$ over all choices of schedule, which we can interpret as the offline cost of completing $v$. We can then consider the problem of choosing a schedule to minimize the maximum ratio of the completion time of a job to its ``offline cost''. This is exactly our expanding search problem.

Since our main objective is to study the algorithmic and computational impact of {\em re-exploration}
due to the transition from pathwise search to expanding search, it is important to compare our results to the best-known bounds
in the context of pathwise search. More precisely, for unweighted graphs, \cite{koutsoupias:fixed} gives asymptotic approximations of the deterministic
and randomized search ratios equal to 6 and 8.98, respectively, but its techniques appear to be applicable also to general graphs, at the expense of somewhat
larger, but nevertheless constant approximations. Furthermore, \cite{koutsoupias:fixed} note
that the problems of computing the search ratios of trees are  ``surprisingly hard''.
In contrast, for expanding search of unweighted graphs and (weighted) trees we obtain optimal algorithms and a 5/4 (asymptotic) approximation of the
deterministic and randomized search ratios, respectively. We thus demonstrate that the transition from pathwise to expanding search can yield
dramatic improvements in terms of the approximability of the search ratios.
For general graphs, we note that our $5.55$ approximation is strict, and not asymptotic. As a last observation, we note that
the pathwise and expanding search algorithms appear to depend crucially on the approximability of the Traveling Salesman problem and the Steiner Tree problem, respectively.


\section{Preliminaries}
\label{sec:preliminaries}

Let $G=(\mathcal{E},\mathcal{V})$ be an undirected, connected, edge-weighted graph with $|\mathcal{V}|=n+1$,
and a distinguished root vertex $O \in \mathcal{V}$. The weight or {\em length} of edge $e \in \mathcal{E}$, denoted by $\lambda(e)$, represents the time required to search that edge (we assume, via normalization, that $\lambda(e) \geq 1$ for all edges $e$). For subgraphs or subsets of edges $X$, we write $\lambda(X)$ for the sum of the lengths of all the edges in $X$. We will call a graph of unit edge weights {\em unweighted}, otherwise it is weighted.

An expanding search, or simply \emph{search strategy} on $G$ is a sequence of edges, starting from the root, chosen so that the set of edges that have been searched at any given point in the sequence is a connected, increasing set. More precisely:
\begin{definition}
    \label{def:discrete-exp}
    An expanding search $S$ on a graph $G$ with root vertex $O$ is a sequence of edges $(e_1,\ldots,e_{n})$ such that every {\em prefix} $\{e_1,\ldots,e_k\}$, $k=1,\ldots,n$ is a subtree of $G$ rooted at $O$. We denote the set of all expanding searches on $(G,O)$ by $\mathcal{S}=\mathcal{S}(G,O)$.
\end{definition}

We note that if we wished,  we could define search strategies less restrictively so that every prefix is simply a connected subgraph rather than a tree,
but it will soon be clear that strategies fulfilling Definition \ref{def:discrete-exp} are dominant.

For a given vertex $v \in \mathcal{V}$ and a given search strategy $S=(e_1,\ldots,e_{n})$, denote by $S_v$ the first prefix $\{e_1,\ldots,e_k\}$ that covers $v$. The \emph{search time}, $T(S,v)$ of $v$ is the total time $\lambda(S_v)$ taken to search all the edges before $v$ is discovered. Let $d(v)$ denote the length of the shortest path from $O$ to $v$, which is the minimum time for the Searcher to discover $v$. For $v \neq O$ the {\em normalized search time} is denoted by $\hat{T}(S,v)=T(S,v)/d(v)$.

\begin{definition} The (deterministic) search ratio $\sigma_S=\sigma_S(G)$ of a search strategy $S$ for the graph $G$ is defined as
    \[
    \sigma_S(G) = \max_{v \in \mathcal{V}-\{O\}} \hat{T}(S,v).
    \]
    The (deterministic) search ratio, $\sigma = \sigma(G)$ of $G$ is defined as
    \[
    \sigma(G) = \min_{S \in \mathcal{S}} \sigma_S(G).
    \]
    If $\sigma_S=\sigma$ we say $S$ is optimal.
\end{definition}
\smallskip

We will also consider \emph{randomized search strategies}, that is some probabilistic choice of search strategies.
Following the standard notation, we denote randomized strategies by lower case letters, and for a randomized search strategy $s$ and a vertex $v$, we extend the notation $T(s,v)$ to denote the expected search time of $v$. Similarly we write $\hat{T}(s,v)$ for the {\em expected normalized search time} $T(s,v)/d(v)$.

\begin{definition} \label{def:rho} The randomized search ratio $\rho_s=\rho_s(G)$ of a randomized search strategy $s$ for the graph $G$ is given by
    \[
    \rho_s(G) = \max_{v \in \mathcal{V}-\{O\}} \hat{T}(s,v).
    \]
    The randomized search ratio, $\rho=\rho(G)$ of $G$ is given by
    \[
    \rho(G) = \inf_s \rho_s(G),
    \]
    where the infinum is taken over all possible randomized search strategies $s$. If $\rho_s=\rho$ we say $s$ is optimal.
\end{definition}
\smallskip

We will view the randomized search ratio $\rho$ through the lens of a finite zero-sum game between a Searcher and a malevolent Hider. The Searcher's pure strategy set is the set $\mathcal{S}$ of expanding searches and the Hider's pure strategy set is the set $\mathcal{V}-\{O\}$ of non-root vertices of $G$. For a Hider strategy $v \in \mathcal{V}-\{O\}$ and a Searcher strategy $S \in \mathcal{S}$, the payoff of the game is $\hat{T}(S,v)$, which the Hider wishes to maximize and the Searcher wishes to minimize. Since the strategy sets are finite, the game has a value and optimal mixed strategies for both players. By the standard minimax theorem for zero-sum games, the value of the game is equal to the randomized search ratio and an optimal randomized search strategy is an optimal mixed strategy for the Searcher in the game. It follows that the infinum in Definition~\ref{def:rho} is in fact a minimum. A mixed strategy for the Hider is a probability distribution $h$ over the vertices $\mathcal{V}-\{O\}$, and for mixed strategies $h$ and $s$ of the Hider and Searcher respectively, we write $T(s,h)$ and $\hat{T}(s,h)$ for the corresponding expected search time and expected normalized search time.

We will obtain lower bounds for $\rho(G)$ by giving explicit Hider strategies. More precisely, if $h$ is a given mixed Hider strategy, the minimax theorem implies that $\rho(G) \ge \min_{S \in \mathcal{S}} \hat{T}(S,h)$.

\section{The search ratio of weighted graphs}
\label{sec:general.discrete}

In this section we show that the problem of computing the (deterministic) search ratio is NP-hard.
We also give a search strategy that achieves a $4\ln(4) \approx 5.55$ approximation ratio.

\begin{theorem}
    Given a graph $G$ with root $O$ and a constant $R \ge 0$, it is NP-Complete to decide
    whether $\sigma(G) \leq R$.
    \label{thm:np-c}
\end{theorem}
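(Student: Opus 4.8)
The membership in NP is immediate: a search strategy $S$ is a sequence of $n$ edges, which is a polynomial-size certificate, and given $S$ one can compute $T(S,v)$ for every vertex $v$ and check $\hat T(S,v) \le R$ in polynomial time. So the entire effort goes into NP-hardness, which I would establish by a reduction from a known NP-hard problem. The natural candidate, following the spirit of \cite{koutsoupias:fixed} but adapted to the expanding paradigm, is a reduction from \textsc{3-SAT} (or from \textsc{Set Cover}/\textsc{Exact Cover}, or from \textsc{Hamiltonian Path}). The excerpt itself flags that the reduction is ``substantially more complicated than for pathwise search,'' which tells me the right source problem is probably one where the combinatorial structure of a satisfying assignment or an exact cover has to be encoded into the order in which an expanding search grows its subtree.

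The plan is to construct, from an instance of the source problem, a weighted graph $G$ with root $O$ together with a threshold $R$, such that $\sigma(G) \le R$ if and only if the instance is a yes-instance. I would build $G$ in ``layers'' hanging off the root: a first layer of ``decision'' gadgets (one per variable or per set), whose edges are long enough that an expanding search must commit early to searching some of them, and a second layer of ``constraint'' vertices (one per clause or per ground element) attached so that a constraint vertex is at small shortest-path distance $d(v)$ from $O$ but can only be reached cheaply by the search if the earlier layer was grown in a way consistent with satisfying that constraint. The threshold $R$ is then tuned so that every constraint vertex has $\hat T(S,v) \le R$ exactly when the search order corresponds to a consistent assignment/cover; vertices that are ``witnesses'' of an inconsistency would be forced to have normalized search time exceeding $R$. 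A padding construction (many parallel cheap edges, or a single heavy edge at the root) can be used to calibrate $d(v)$ and the accumulated search time so that the arithmetic works out exactly at the boundary $R$.

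The main obstacle — and the reason the reduction is more delicate than in the pathwise case — is the ``re-exploration is free'' feature of expanding search: in a path search, forcing the searcher to traverse a long edge costs it again on the way back, which makes lower bounds on search time easy to enforce; in expanding search, once an edge is searched it is never paid for again, so a clever searcher can grow its tree opportunistically and ``cheat'' naive gadgets. I would therefore need gadgets that are robust to this: the key trick is that the \emph{total} length searched before reaching a constraint vertex is what matters, so I should make the decision-layer edges costly in aggregate (so the searcher cannot afford to search all of them before reaching a given constraint vertex within the budget $R\cdot d(v)$) while ensuring that any ``consistent'' prefix has exactly the right total length. Verifying the two directions of the equivalence — (i) a yes-instance yields a search with $\sigma_S \le R$, by searching the decision layer in the order dictated by the assignment and then sweeping up the constraint vertices, and (ii) any search with $\sigma_S \le R$ induces, via which decision edges were searched ``early,'' a valid assignment/cover — is where the careful choice of edge weights and of $R$ has to be checked, and that case analysis (especially direction (ii), ruling out that the searcher exploits free re-exploration to satisfy the budget without a consistent assignment) is the technical heart of the proof.
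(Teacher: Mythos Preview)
Your high-level plan matches the paper's approach: the reduction is from \textsc{3-SAT}, and the construction does use a layered gadget with variable/literal structure and clause vertices whose reachability within the budget $R\cdot d(v)$ encodes satisfiability. So the overall direction is right, and your diagnosis of why direction (ii) is the hard part (free re-exploration lets the Searcher grow its tree opportunistically) is exactly the issue.

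That said, what you have is a plan, not a proof, and the paper's actual construction contains a non-obvious idea that your sketch does not anticipate. The paper introduces a single \emph{hub vertex} $P$ adjacent to the root (at distance $3$) and adjacent to \emph{all} literal vertices $X_i^0,X_i^1$ via unit edges; clause vertices connect to their literals by length-$2$ edges, and there are also direct length-$3$ edges from $O$ to every clause vertex and every variable vertex $X_i$. The threshold is set so that the constraint binds \emph{simultaneously} on all distance-$3$ vertices: any search with ratio $\le R$ must cover the set $\{P,X_1,\ldots,X_n,C_1,\ldots,C_m\}$ using a subtree of total length exactly $3R=3+2n+2m$. This is different from your picture of a per-clause budget; the argument is global, not local.

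The step that your plan leaves genuinely open is the hard direction. The paper does \emph{not} argue directly that a low-ratio search ``reads off'' an assignment; instead it takes the prefix $H$ covering all distance-$3$ vertices and applies a sequence of local, length-non-increasing transformations (swap $(O,C_j)$ for a clause--literal--$P$ path; swap $(O,X_i)$ for $(X_i,X_i^0),(X_i^0,P)$; prune multiple clause-incidences; etc.) until $H$ has a canonical form $H^b$ indexed by an assignment $b\in\{0,1\}^n$. A counting argument then shows $\lambda(H^b)=3R$ forces exactly one literal per variable to be used and every clause to be reached through a satisfying literal. This transformation-to-canonical-form technique is the real content of the reduction, and it is precisely what neutralizes the ``free re-exploration'' obstacle you flagged; your proposal identifies the obstacle but does not supply the mechanism that overcomes it.
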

\begin{proof}
The proof is based on a reduction from 3-SAT. Given a 3-SAT instance consisting of $n$ variables and $m$ clauses
with $m\geq n$, we construct an instance of our problem.

We construct the graph $G$ consisting of  vertices $O,P$, a vertex $C_j$ for every
clause (the \emph{clause vertices}), vertices $X_i$ (the \emph{variable vertices}) and
vertices $X_i^0,X_i^1$ (the \emph{literal vertices}) for every variable.
For every $i=1,\ldots,n$ there are unit length edges of the form
$
(X_i,X_i^0),
(X_i,X_i^1),
(P,X_i^0),
(P,X_i^1).
$
For every variable $x_i$ appearing positively in the $j$-th clause there is an edge $(C_j,X_i^1)$ of length $2$ and for every variable $x_i$ appearing negatively in the $j$-th clause there is an edge $(C_j,X_i^0)$ of length $2$. For every $j=1,\ldots,m$ there is an edge $(O,C_j)$ of length $3$ and for every $i=1,\ldots,n$ there is an edge $(O,X_i)$ of length $3$. Finally, there is an edge $(O,P)$ of length $3$. We fix $R = 1 + \frac23(n+m)$.
The construction is shown in Figure~\ref{fig:NP-hard}.
\begin{figure}[htb!]
    \centerline{\includegraphics[width=0.5\textwidth]{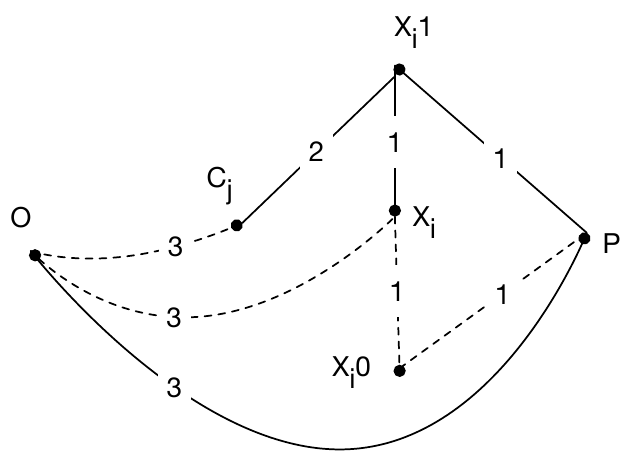}}
    \caption{A schematic view of the graph $G$ used in the reduction of Theorem~\ref{thm:np-c}.}
    \label{fig:NP-hard}
\end{figure}

Note that the vertices can be partitioned according to their distance from $O$. In particular, vertex $P$,
as well as variable and clause vertices have distances $3$, whereas literal vertices have distance $4$.




We must show that there exists a boolean assignment to the variables satisfying all clauses if and only if the search ratio of $G$ is at most $R$.

For the easy direction of the proof, consider a boolean assignment $b\in\{0,1\}^n$ satisfying all clauses. We will show that there is a search strategy with search ratio at most $R$.  First we construct a tree covering all distance $3$ vertices with total length $3R$.  The first edge is $(O,P)$, followed by the edges $(P,X_i^{b_i})$ for every $i$. Then for every $j$, there is an edge $(X_i^{b_i},C_j)$ where $X_i^{b_i}$ corresponds to an arbitrary literal satisfying the $j$-th clause. Finally the tree also contains the edges $(X_i^{b_i},X_i)$ for all $i$.  We denote the tree constructed from $b$ by $H^b$. The total length of $H^b$ is $3+2n+2m$ which is exactly $3R$ by the choice of $R$.  To turn the tree into a search strategy $S$ we order the edges from $H^b$ as enumerated above and complete them with the edges $(P,X_i^{1-{b_i}})$ for all $i$.  We have $\sigma_S(P)=1, \sigma_S(C_j) \leq 3R/3, \sigma_S(X_i) \leq 3R/3$ and $\sigma(X_i^x)\leq (3R + n)/4\leq R$ for every $i,j,x$, which shows that the search ratio of $G$ is at most $R$.

For the hard direction, assume that there is a search strategy with search ratio at most $R$.  Let $H$ be its shortest prefix covering all distance $3$ vertices.  By the definition of the search ratio we know that $\lambda(H)\leq 3R$.  Through a sequence of transformations we turn $H$ into a tree of the form $H^b$ with
$\lambda(H^b)\leq \lambda(H)$.  This will show that $b$ is a satisfying assignment for the formula and complete the proof of the theorem.

\begin{itemize}
    \item If $(O,P)$ does not belong to $H$ we add it. This must create a cycle, containing an edge of the form $(O,v)$ with $v\neq P$. Now we remove this edge, and obtain a tree of the same length.
    \item If there is an edge $(O,C_j)$ in $H$ for some $j$, then we replace this edge by the edges $(C_j,v),(v,P)$, where $v$ is a vertex corresponding to a literal from the $j$-th clause.  Some of the added edges might already have been present.  The result is a tree of no greater length.
    \item If there is an edge of the form $(O,X_i)$ in $H$ for some $i$, then we replace this edge by the edges $(X_i,X_i^0),(X_i^0,P)$.  As a result the length of the tree decreased by 1.
    \item At this stage we know that $O$ is only connected to $P$ in the tree.
    \item If there is a vertex $C_j$ connected to several vertices $v_1,\ldots,v_k$ for $k\geq 2$, then we remove the edges $(C_j,v_1),\ldots,(C_j,v_k)$.  Hence, the tree now contains $k+1$ components, the isolated vertex $C_j$ itself and $k$ components containing each some distinct vertex $v_i$. Only one of them also contains $P$.  Without loss of generality suppose that $v_1$ and $P$ are in the same component.  Then we add $(C_j,v_1)$ back to $H$ and add edges $(v_i,P)$ for each $i=2,\ldots,k$.  This way we maintain a tree, and its length decreases by $k-1$.
    \item At this stage we know that every $C_j$ vertex is incident to exactly one length $2$ edge.  Also for every $i=1,\ldots,n$, among the vertices $\{X_i,X_i^0,X_i^1,P\}$ there are at least two edges, one adjacent to $X_i$ and one adjacent to $P$.  The last edge is necessary since otherwise there would be no connection from the vertices $\{X_i,X_i^0,X_i^1\}$ to $P$, since by the previous point we know that such a path could not go through a clause vertex.
    Let $k$ be the total number of additional edges that exist among the vertex sets $\{X_i,X_i^0,X_i^1,P\}$ over all $i=1,\ldots,n$.  Then the total length of $H$ is $3 + 2m + 2n + k$, which by assumption is at most $3R$. By the choice of $R$ we have equality and thus $k=0$. This shows that $H$ is a tree of the form $H^b$ for some $b\in\{0,1\}^n$, which is a satisfying assignment. 
\end{itemize}
\end{proof}
\smallskip


Using an approach similar to the doubling heuristic of \cite{koutsoupias:fixed}, we obtain a constant-approximation algorithm for computing the search ratio. It is worth pointing out that the algorithm doubles the radius, and explores the
resulting graph by computing a Steiner tree of the corresponding vertex set (in contrast to pathwise search,
in which the resulting graph is simply explored depth-first).

\begin{theorem}
    There is a polynomial-time search algorithm that approximates $\sigma(G)$ within a factor of
    $4\ln(4) +\epsilon <5.55$.
    \label{thm:deterministic.approx}
\end{theorem}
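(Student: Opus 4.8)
\noindent The plan is to adapt the geometric ``doubling'' heuristic of \cite{koutsoupias:fixed} to the expanding setting, with one change: where that heuristic explores each distance ball by a depth-first traversal (appropriate for pathwise search), we instead cover the ball by an approximate minimum Steiner tree, which is the natural counterpart for expanding search and also means we never need to guess the optimal ratio. Concretely: compute $d(v)$ for all $v$ by Dijkstra from $O$; since $\lambda(e)\ge 1$ we have $d(v)\ge 1$ for every non-root vertex. For $i=0,1,\ldots,K$ with $K=\lceil\log_2\max_v d(v)\rceil$ --- a quantity polynomial in the input length --- let $V_i=\{O\}\cup\{v:d(v)\le 2^i\}$ be the $i$-th distance ball, so $V_K=\mathcal V$, and let $T_i$ be an $\alpha$-approximate minimum Steiner tree of $G$ with terminal set $V_i$; by the best known polynomial-time Steiner-tree approximation we may take $\alpha=\ln 4+\epsilon'$ for any fixed $\epsilon'>0$.

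The first substantive step is a Steiner-tree lower bound on the optimum. Let $S^\ast$ be an optimal deterministic search, $\sigma^\ast:=\sigma(G)=\sigma_{S^\ast}$, and let $P_i$ be the shortest prefix of $S^\ast$ covering every vertex of $V_i$. By Definition~\ref{def:discrete-exp}, $P_i$ is a subtree of $G$ rooted at $O$ that contains all of $V_i$, hence it is a Steiner tree for $V_i$; and $\lambda(P_i)$ is exactly the search time under $S^\ast$ of the last vertex of $V_i$ to be covered, which is at most $\sigma^\ast\cdot 2^i$. Therefore the minimum Steiner tree of $V_i$ has weight at most $\sigma^\ast 2^i$, and the tree $T_i$ returned by the algorithm satisfies $\lambda(T_i)\le\alpha\,\sigma^\ast 2^i$.

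Next, splice the trees $T_i$ into a single expanding search. Build $\hat U_0\subseteq\hat U_1\subseteq\cdots\subseteq\hat U_K$ incrementally: $\hat U_0=T_0$, and $\hat U_i$ is obtained from $\hat U_{i-1}$ by adding the edges of $T_i$ one at a time, discarding any that would form a cycle. Each $\hat U_i$ is then a subtree of $G$ rooted at $O$ spanning $\hat U_{i-1}\cup T_i\supseteq V_i$, with $\lambda(\hat U_i)\le\lambda(\hat U_{i-1})+\lambda(T_i)\le\sum_{k=0}^{i}\alpha\sigma^\ast 2^k<\alpha\sigma^\ast 2^{i+1}$. Because $\hat U_K$ spans all $n+1$ vertices it has exactly $n$ edges, so the sequence $S$ that lists the edges of $\hat U_0$, then those of $\hat U_1\setminus\hat U_0$, then those of $\hat U_2\setminus\hat U_1$, and so on --- within each block ordering the edges so that each one attaches a fresh vertex to the tree built so far, which is always possible since each $\hat U_i$ is connected and contains the previous tree --- is a legitimate expanding search on $G$. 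For a non-root vertex $v$ let $j$ be minimal with $d(v)\le 2^j$; then $v\in V_j$, so $T(S,v)\le\lambda(\hat U_j)<\alpha\sigma^\ast 2^{j+1}$, whereas $d(v)>2^{j-1}$ when $j\ge 1$ and $d(v)=1$ when $j=0$; in either case $\hat T(S,v)<4\alpha\sigma^\ast$. Hence $\sigma_S(G)<4(\ln 4+\epsilon')\sigma(G)$, and taking $\epsilon'=\epsilon/4$ gives the claimed ratio $4\ln 4+\epsilon<5.55$; the whole procedure uses $O(K)$ Dijkstra/Steiner-tree calls plus linear bookkeeping, hence runs in polynomial time.

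I expect the only delicate point to be the interface between the two ingredients --- verifying that concatenating several distinct approximate Steiner trees still respects the increasing-subtree requirement of Definition~\ref{def:discrete-exp} while keeping the cumulative weight bounded by the geometric sum $\sum_k\alpha\sigma^\ast 2^k$ rather than something larger. Everything else is routine: the constant $4$ is optimal for a geometric schedule of ratio $b>1$, because the resulting bound is proportional to $b^2/(b-1)$, which is minimized at $b=2$, so the only lever on the final constant is the Steiner-tree approximation factor; this is why it comes out as $4\ln 4$ and would improve automatically with future progress on Steiner tree.
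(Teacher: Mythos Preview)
Your proposal is correct and follows essentially the same approach as the paper: doubling the radius, covering each distance ball by an approximate minimum Steiner tree (using the $\ln 4+\epsilon$ approximation of \cite{BGRS:steiner}), invoking the lower bound that any search must spend at least the Steiner-tree weight of $V_i$ to cover $V_i$, and summing the resulting geometric series to obtain the factor~$4$. Your write-up is, if anything, more careful than the paper's on two points---you verify explicitly that the number $K$ of phases is polynomial in the input length, and you spell out how to splice the successive approximate Steiner trees into a single sequence satisfying Definition~\ref{def:discrete-exp} by discarding cycle-forming edges and ordering the remainder by attachment---where the paper simply says to search the edges of each $\hat G_{d_j}$ ``in an arbitrary order (omitting those edges that have already been searched)''.
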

\begin{proof}
For any $d \ge 1 $, let $\mathcal{V}_d$ be the set of vertices of $G$ at distance no more than $d$ from the root $O$. Let $G_d$ be the subtree of minimal length in $G$ that contains all the vertices in $\mathcal{V}_d$. It is easy to see that $\sigma$ is at least $\lambda(G_d)/d$, since the time taken by any search to reach the last vertex $v$ visited in $\mathcal{V}_d$ is at least $\lambda(G_d)$, and $d(v) \leq d$.

We can view the problem of computing $G_d$ as the problem of finding a minimum-cost Steiner tree for the
set of vertices $\mathcal{V}_d$. The best known polynomial time algorithm that approximates the problem within a constant factor is that of \cite{BGRS:steiner}, which has approximation ratio
$\ln (4) +\epsilon$. Let $\hat{G}_d$ be a subtree of $G$ with total length no greater than $\ln(4) \lambda(G_d)$ that contains all the vertices in $\mathcal{V}_d$.

Consider the following family of search strategies $S$, for fixed $1=d_0,d_1,\ldots,d_k$, where $d_k$ is the radius of $G$. In increasing order of $j \ge 1$, search all the edges of $\hat{G}_{d_j}$ in an arbitrary order (omitting those edges that have already been searched). Suppose the Hider is at some vertex $v$ reached in the $j$th phase of the algorithm. Then $d(v)$ must be at least $d_{j-1}$, and
\[
\hat{T}(S,v) \le \frac{\sum_{i=1}^{j} \lambda(\hat{G}_{d_i})}{d_{j-1}} \le \frac{\ln(4) \sum_{i=1}^{j} \lambda({G}_{d_i})}{d_{j-1}} \le \frac{\ln(4) \sigma \sum_{i=1}^{j} d_i}{d_{j-1}}.
\]
It is optimal to choose $d_i = 2^i$ (for a proof of this, see \cite{koutsoupias:fixed}). So we obtain $\hat{T}(S,v) \le 4\ln(4) \sigma <5.55 \sigma$. 
\end{proof}
\smallskip

\section{Trees and Unweighted Graphs}
\label{sec:trees}

In this section we present our main technical results that apply to unweighted graphs and (weighted) trees.
For both classes of graph it is easy to show that it is optimal, in the deterministic setting,
to search the vertices in non-decreasing order of their distance from the root.

If $G$ is a graph with root $O$, for any $r > 0$ let $\mathcal{V}_r$ be the set of vertices in $G$ at distance no more than $r$ from the root $O$ and let $G_r$ be the induced subgraph of $G$ with vertex set $\mathcal{V}_r$.

\begin{proposition}
    \label{prop:det.search}
    Let $G$ be a rooted graph and suppose that $G$ is a tree or an unweighted graph. Then an optimal search strategy is to search the vertices in non-decreasing order of their distance from the root. The search ratio $\sigma$ is given by
    \begin{enumerate}
        \item[(i)] $\sigma = \sup_{r>0} \frac{\lambda(G_r)}{r}$ if $G$ is a tree and
        \smallskip
        \item[(ii)] $\sigma = \sup_{r>0} \frac{\left|\mathcal{V}_r\right|-1}{r}$ if $G$ is an unweighted graph.
    \end{enumerate}
\end{proposition}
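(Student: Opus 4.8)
The plan is to treat the tree and unweighted cases together as far as possible. On a graph with $n+1$ vertices, Definition~\ref{def:discrete-exp} forces an expanding search to build a spanning tree of $G$ one edge at a time, each new edge introducing exactly one new vertex; so a search strategy $S$ is the same thing as an ordering $v_1,\dots,v_n$ of the non-root vertices in which each $v_i$ is adjacent in $G$ to $O$ or to some earlier $v_j$, and $T(S,v_i)$ is the total length of the first $i$ edges searched (which is just $i$ when $G$ is unweighted). The basic structural observation I would record is that $\mathcal{V}_r$ is closed under shortest-path prefixes: if $v\in\mathcal{V}_r$, then every vertex on a shortest $O$--$v$ path lies in $\mathcal{V}_r$. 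Hence $G_r$ is connected and contains $O$; when $G$ is a tree it is a subtree rooted at $O$, with $\lambda(G_r)$ equal to the sum of the parent-edge lengths over $\mathcal{V}_r\setminus\{O\}$. Since every vertex at distance exactly $r$ is adjacent to its predecessor on a shortest path, which lies at distance $r-\lambda(e)<r$, the strategy $S^*$ that orders the non-root vertices by non-decreasing $d$ (ties broken arbitrarily) is a valid expanding search.

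Next I would bound $\sigma_{S^*}$ from above. When $S^*$ reaches a vertex $v$ with $d(v)=r$, all vertices reached so far lie in $\mathcal{V}_r$, so the edges searched so far are confined to $G_r$: in the tree case they form a subtree of $G_r$, giving $T(S^*,v)\le\lambda(G_r)$; in the unweighted case at most $|\mathcal{V}_r|-1$ edges have been searched, giving $T(S^*,v)\le|\mathcal{V}_r|-1$. Dividing by $d(v)=r$ yields $\hat T(S^*,v)\le\lambda(G_r)/r$, respectively $(|\mathcal{V}_r|-1)/r$, and taking the maximum over $v$ gives $\sigma_{S^*}\le\sup_{r>0}\lambda(G_r)/r$, respectively $\sup_{r>0}(|\mathcal{V}_r|-1)/r$.

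For the matching lower bound I would show that \emph{every} strategy $S$ satisfies $\sigma_S\ge\lambda(G_r)/r$ (resp.\ $(|\mathcal{V}_r|-1)/r$) for each $r>0$. Let $v$ be the last vertex of $\mathcal{V}_r\setminus\{O\}$ that $S$ reaches. By that moment $S$ has reached all of $\mathcal{V}_r$, hence has searched a connected subgraph spanning $\mathcal{V}_r$; in the tree case such a subgraph has length at least $\lambda(G_r)$, because any connected subgraph of the tree $G$ spanning $\mathcal{V}_r$ must contain every $O$--$v'$ path for $v'\in\mathcal{V}_r$ and therefore all of $G_r$, and in the unweighted case it uses at least $|\mathcal{V}_r|-1$ edges. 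Since $d(v)\le r$ this gives $\hat T(S,v)\ge\lambda(G_r)/r$, resp.\ $(|\mathcal{V}_r|-1)/r$. Applying this to $S=S^*$ and combining with the previous paragraph shows $\sigma=\sigma_{S^*}=\sup_{r>0}\lambda(G_r)/r$ in the tree case and $\sigma=\sigma_{S^*}=\sup_{r>0}(|\mathcal{V}_r|-1)/r$ in the unweighted case, so $S^*$ is optimal.

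The point requiring care — and the one that fails in a general weighted graph — is the identification of ``the edges one is forced to search in order to reach all of $\mathcal{V}_r$'' with something of clean length $\lambda(G_r)$ or cardinality $|\mathcal{V}_r|-1$. This works here only because in a tree every edge has a unique deeper endpoint and the induced subgraph on a shortest-path-closed vertex set is already a subtree, and because in an unweighted graph the elapsed time equals the number of edges searched. In a general weighted graph, reaching $\mathcal{V}_r$ can require a Steiner tree strictly longer than the induced subgraph $G_r$, which is precisely why Section~\ref{sec:general.discrete} resorts to an approximation via Steiner-tree computations rather than an exact formula; verifying connectivity of $G_r$ and, in the tree case, that $G_r$ is the minimal spanning subtree of $\mathcal{V}_r$ is the only genuinely load-bearing step of the argument.
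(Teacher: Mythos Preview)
Your proof is correct, but it follows a different route from the paper's. The paper establishes optimality of $S^*$ via an \emph{exchange argument}: starting from an arbitrary search $S$ that is not in non-decreasing distance order, it locates an out-of-place vertex $v_j$ (one with $d(v_j)<d(v_i)$ for some earlier $v_i$), moves it forward in the ordering, and checks that no normalized search time increases; iterating yields a non-decreasing-distance search with search ratio no larger than that of $S$. The formulas~(i) and~(ii) are then declared to ``follow immediately''. You instead give a direct \emph{sandwich argument}: you bound $\sigma_{S^*}$ from above by $\sup_r \lambda(G_r)/r$ (resp.\ $\sup_r(|\mathcal{V}_r|-1)/r$) using that all edges searched by $S^*$ up to a vertex at distance $r$ lie in $G_r$, and you bound every $\sigma_S$ from below by the same quantity via the last vertex of $\mathcal{V}_r$ to be reached. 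Your approach has the advantage of deriving optimality and the closed-form for $\sigma$ in one stroke, and it makes transparent why the argument breaks for weighted graphs (the Steiner-tree issue you flag), which nicely anticipates Section~\ref{sec:general.discrete}. The paper's exchange argument is the more classical template for greedy optimality and arguably gives more insight into \emph{why} reordering helps, but it leaves the formulas as an afterthought that in fact requires essentially the observation you make explicit.
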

\begin{proof}
Suppose a search $S$ visits the vertices in some order $v_1,\ldots,v_n$ which is not non-decreasing in order of distance from $O$ and let $v_0 = O$. Let $i \ge 1$ be minimal such that $d(v_{i-1}) < d(v_i)$ but there exists some $j > i$ such that $d(v_j) < d(v_i)$. (Such an $i$ must exist.) We may assume that $j$ is chosen so that $d(v_j)$ is minimal. Define a new search $S'$ that is the same as $S$ except the portion of the search that visits the vertices $v_i, v_{i+1},\ldots, v_j$ now visits them in the order $v_j, v_i, v_{i+1},\ldots, v_{j-1}$. By the minimality of $d(v_j)$, there must be a path from $j$ to $O$ contained in $\{v_1,\ldots, v_{i-1}\}$, so $S'$ is a feasible search.

As long as $G$ is a tree or it is unweighted, the new search time of $v_{j}$ under $S'$ is smaller, so the new normalized search time is also smaller. The new search times of the vertices $v_i,\ldots,v_{j-1}$ are larger, but no greater than the search time of $v_{j}$ under $S$; also they are further from $O$ than $v_{j}$, so their new normalized search times are no larger than that of $v_j$ under $S$. The normalized search time of every other vertex is the same under $S$ and $S'$. Thus $\sigma_{S'} \le \sigma_{S}$.

Repeating this process a finite number of times results in a search that visits the vertices in non-decreasing order of distance from $O$ and has a search ratio at most that of $S$, so is therefore optimal.

The two expressions for the search ratio of $G$ in the statement of the proposition follow immediately. 
\end{proof}
\smallskip

Note that the above argument does not work for weighted graphs (as we should expect) since in general, swapping the order in which adjacent vertices are visited changes the search times of the other vertices.

In the randomized setting for trees and unweighted graphs we first show that the optimal deterministic search approximates the optimal randomized search by a factor of $2$. To prove this, we use the following collection of lower bounds for $\rho$. For each non-root vertex $v$ of $G$, let $\lambda_v$ denote the length of the unique edge incident to $v$ on some shortest path between $O$ and $v$ (so if $G$ is an unweighted graph then $\lambda_v = 1$) and let $\lambda_O =0$. For a set $\mathcal{A}$ of vertices, let $\lambda(\mathcal{A}) = \sum_{v \in \mathcal{A}} \lambda_v$ and let $\Delta(\mathcal{A}) = \sum_{v \in \mathcal{A}} \lambda_v d(v)$.

\begin{lemma} \label{lemma:good-lb}
    Suppose $\mathcal{A} = \{v_1,\ldots,v_m\}$ is a set of non-root vertices of $G$, and suppose the Hider chooses each $v_i \in \mathcal{A}$ with probability $p_i=\lambda_{v_i} d(v_i) / \Delta(\mathcal{A})$. Then the normalized search time $\hat{T}(S,p)$ of any search $S$ against the Hider strategy $p$ satisfies
    \[
    \hat{T}(S,p) \ge \frac{\sum_{i \le j} \lambda_{v_i} \lambda_{v_j}}{\Delta(\mathcal{A})} \ge \frac{\lambda(\mathcal{A})^2}{2 \Delta(\mathcal{A})}.
    \]
\end{lemma}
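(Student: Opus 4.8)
The plan is to rewrite $\hat T(S,p)$ so that the denominators $d(v_i)$ disappear, and then bound the resulting sum of search times from below using the fact that prefixes of an expanding search are subtrees.

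First I would record that $p$ really is a probability distribution, since $\Delta(\mathcal A)>0$ (each $\lambda_{v_i}\ge 1$ and $d(v_i)>0$) and $\sum_i p_i=\sum_i \lambda_{v_i}d(v_i)/\Delta(\mathcal A)=1$. Substituting $p_i=\lambda_{v_i}d(v_i)/\Delta(\mathcal A)$ into $\hat T(S,p)=\sum_i p_i\,\hat T(S,v_i)$ and using $\hat T(S,v_i)=T(S,v_i)/d(v_i)$, the factors $d(v_i)$ cancel and one gets the identity
\[
\hat T(S,p)\;=\;\frac{1}{\Delta(\mathcal A)}\sum_{i=1}^{m}\lambda_{v_i}\,T(S,v_i).
\]
So the lemma reduces to the inequality $\sum_i\lambda_{v_i}T(S,v_i)\ge\sum_{i\le j}\lambda_{v_i}\lambda_{v_j}$, where (since $2\sum_{i\le j}\lambda_{v_i}\lambda_{v_j}=(\sum_i\lambda_{v_i})^2+\sum_i\lambda_{v_i}^2$ is symmetric in the $v_i$) the labelling on the right-hand side is immaterial.

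The key step is to relabel $\mathcal A$ so that $S$ discovers the vertices in the order $v_1,\dots,v_m$, i.e.\ $T(S,v_1)\le\cdots\le T(S,v_m)$, and then to show
\[
T(S,v_j)\;\ge\;\sum_{i=1}^{j}\lambda_{v_i}\qquad\text{for every }j.
\]
This is where the hypothesis that $G$ is a tree or unweighted enters. By Definition~\ref{def:discrete-exp} the prefix $S_{v_j}$ is a subtree of $G$ containing $O$ together with all of $v_1,\dots,v_j$ (each discovered no later than $v_j$), and $T(S,v_j)=\lambda(S_{v_j})$. If $G$ is a tree, then for each $i\le j$ the unique $O$–$v_i$ path of $G$ lies inside $S_{v_j}$, so $S_{v_j}$ contains the edge of length $\lambda_{v_i}$ incident to $v_i$ on that path; these are the pairwise distinct parent edges of $v_1,\dots,v_j$, hence $\lambda(S_{v_j})\ge\sum_{i\le j}\lambda_{v_i}$. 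If $G$ is unweighted, then every $\lambda_{v_i}=1$ and $S_{v_j}$ is a tree on at least $j+1$ vertices, so it has at least $j$ edges, each of length $1$, giving $\lambda(S_{v_j})\ge j=\sum_{i\le j}\lambda_{v_i}$. I expect this case analysis — arguing precisely why each earlier parent edge must already have been searched — to be the only real content of the proof.

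Finally I would assemble the pieces: multiplying the displayed bound by $\lambda_{v_j}$ and summing over $j$ gives $\sum_j\lambda_{v_j}T(S,v_j)\ge\sum_j\lambda_{v_j}\sum_{i\le j}\lambda_{v_i}=\sum_{i\le j}\lambda_{v_i}\lambda_{v_j}$, which after dividing by $\Delta(\mathcal A)$ yields the first inequality. The second inequality is then pure algebra: $2\sum_{i\le j}\lambda_{v_i}\lambda_{v_j}=\big(\sum_i\lambda_{v_i}\big)^2+\sum_i\lambda_{v_i}^2\ge\lambda(\mathcal A)^2$, so $\sum_{i\le j}\lambda_{v_i}\lambda_{v_j}\ge\lambda(\mathcal A)^2/2$.
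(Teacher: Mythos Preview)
Your proposal is correct and follows essentially the same approach as the paper: relabel so that $S$ visits the vertices of $\mathcal A$ in the order $v_1,\dots,v_m$, use $T(S,v_j)\ge\sum_{i\le j}\lambda_{v_i}$, and then sum. The paper simply asserts this key inequality as ``clear'' and calls the second inequality ``trivial''; you supply the actual justification for both, in particular the case analysis showing why the inequality $T(S,v_j)\ge\sum_{i\le j}\lambda_{v_i}$ genuinely requires the tree-or-unweighted hypothesis (it can fail in a general weighted graph, since the edge by which $S$ first reaches $v_i$ need not be the shortest-path edge defining $\lambda_{v_i}$).
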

\begin{proof}
Suppose, without loss of generality, that $S$ is the search strategy that visits the vertices of $\mathcal{A}$ in the order $v_1,\ldots,v_m$. Then clearly $T(S,v_i) \ge \sum_{j \le i} \lambda_{v_j}$ for all $v_i$ so the expected value of the search ratio of $S$, under $p$ is
\[
\sum_{i=1}^m \frac{p_i T(S,v_i)}{d(v_i)} \ge \sum_{i=1}^m \frac{\lambda_{v_i} d(v_i)}{d(v_i) \Delta(\mathcal{A})} \sum_{j \le i} \lambda_{v_j} =  \frac{\sum_{i \le j} \lambda_{v_i} \lambda_{v_j}}{\Delta(\mathcal{A})}.
\]
The second inequality is trivial.
\end{proof}
\smallskip

The next proposition follows directly from Lemma~\ref{lemma:good-lb}.

\begin{proposition}
    The randomized search ratio $\rho$ of a tree or unweighted graph satisfies
    $
    \sigma/2 \le \rho \le \sigma.
    $
    \label{trees:randomized}
    Hence the optimal deterministic search is a $2$-approximation of the optimal randomized search.
\end{proposition}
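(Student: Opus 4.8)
The plan is to treat the two inequalities separately. The upper bound $\rho\le\sigma$ is essentially free: every deterministic search is a degenerate randomized search, so if $S$ is an optimal deterministic search strategy then $\rho=\inf_s\rho_s\le\rho_S=\sigma_S=\sigma$. Nothing further is needed there, and the final sentence of the proposition is then also immediate, since the optimal deterministic search has search ratio $\sigma\le 2\rho$.

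The substance is the bound $\rho\ge\sigma/2$, and the idea is to obtain it from a single application of Lemma~\ref{lemma:good-lb} to a well-chosen Hider set. Fix any radius $r>0$ and take $\mathcal{A}=\mathcal{V}_r\setminus\{O\}$, the non-root vertices within distance $r$ of the root, with the Hider distribution $p$ prescribed in the lemma. First I would use the crude but essential estimate $d(v)\le r$ for every $v\in\mathcal{A}$ to get $\Delta(\mathcal{A})=\sum_{v\in\mathcal{A}}\lambda_v d(v)\le r\,\lambda(\mathcal{A})$. Combining this with Lemma~\ref{lemma:good-lb} and the minimax lower bound $\rho\ge\min_{S}\hat T(S,p)$ from Section~\ref{sec:preliminaries} yields
\[
\rho\ \ge\ \frac{\lambda(\mathcal{A})^2}{2\Delta(\mathcal{A})}\ \ge\ \frac{\lambda(\mathcal{A})}{2r}.
\]
It then remains to match $\lambda(\mathcal{A})$ with the quantities in Proposition~\ref{prop:det.search}. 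In the unweighted case this is trivial, as $\lambda_v=1$ for all $v$, so $\lambda(\mathcal{A})=|\mathcal{A}|=|\mathcal{V}_r|-1$ and taking the supremum over $r$ gives $\rho\ge\tfrac12\sup_{r>0}(|\mathcal{V}_r|-1)/r=\sigma/2$ by part (ii). In the tree case, the point to check is that $\mathcal{V}_r$ is closed under taking ancestors — a vertex within distance $r$ has its entire root-path within distance $r$, since distances are monotone along a root-path in a tree — so $G_r$ is a subtree of $G$ containing $O$ in which each non-root vertex $v$ contributes exactly the edge of length $\lambda_v$ joining it to its parent; hence $\lambda(G_r)=\sum_{v\in\mathcal{A}}\lambda_v=\lambda(\mathcal{A})$, and taking the supremum over $r$ gives $\rho\ge\tfrac12\sup_{r>0}\lambda(G_r)/r=\sigma/2$ by part (i).

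I do not expect a genuine obstacle here: the only things needing care are the elementary structural fact that in a tree $G_r$ is precisely the union of the root-paths of the vertices of $\mathcal{V}_r$, so that its total length telescopes to $\sum_{v\in\mathcal{A}}\lambda_v$, and the bookkeeping that turns $\Delta(\mathcal{A})$ into $r\,\lambda(\mathcal{A})$ via $d(v)\le r$. One minor subtlety is whether the suprema in Proposition~\ref{prop:det.search} are attained; they are, because $\lambda(G_r)$ (resp.\ $|\mathcal{V}_r|$) is a step function of $r$ that is constant between consecutive vertex-distances while the ratio is decreasing on each such interval, but this is not even needed, since $\rho\ge\lambda(G_r)/(2r)$ holding for every $r$ already yields $\rho\ge\sigma/2$ after passing to the supremum.
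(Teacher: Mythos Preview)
Your proof is correct and takes essentially the same approach as the paper: apply Lemma~\ref{lemma:good-lb} to the set $\mathcal{A}$ of non-root vertices within distance $r$ of the root and use $\Delta(\mathcal{A})\le r\,\lambda(\mathcal{A})$. The only cosmetic difference is that the paper fixes $r=d(v)$ for the specific vertex $v$ at which the optimal deterministic search $S^*$ attains its search ratio (so that $T(S^*,v)=\lambda(\mathcal{A})$ and $\sigma=\lambda(\mathcal{A})/d(v)$ directly), whereas you prove $\rho\ge\lambda(\mathcal{A})/(2r)$ for every $r$ and then invoke the formulas of Proposition~\ref{prop:det.search} after taking the supremum; both routes are equivalent.
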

\begin{proof}
Let $S^*$ be the optimal deterministic search strategy that searches the vertices in non-decreasing order of their distance from the root, and suppose $v$ is a vertex such that $\sigma = \sigma_{S^*} = \hat{T}(S^*,v)$. If $\mathcal{A}$ is the set of non-root vertices at distance no more than $d(v)$, then we must have $T(S^*,v) = \lambda(\mathcal{A})$, so by Lemma~\ref{lemma:good-lb},
\[
\frac{\sigma}{\rho} \le \frac{\lambda(\mathcal{A})/d(v)}{\lambda(\mathcal{A})^2/(2\Delta(\mathcal{A}))} = \frac{2 \Delta(\mathcal{A})}{d(v) \lambda(\mathcal{A})} \le 2,
\]
since $\Delta(\mathcal{A}) \le \lambda(\mathcal{A}) d(v)$.
\end{proof}
\smallskip

We will show next that we can obtain improved approximations; more precisely we will present and analyze a randomized
search for trees or unweighted graphs with approximation ratio asymptotically equal to $5/4$. In the case that $G$ is an unweighted graph, we will define the search on some shortest path tree (that is, a spanning tree of $G$ comprising shortest paths from $O$ to each vertex); if $G$ is a weighted tree then we define the search on the whole of $G$. The idea of the strategy is to partition the vertices of the tree into subsets $\mathcal{V}_0,\mathcal{V}_1,\ldots$, each of which contains vertices whose distances from $O$ are within some interval $[x_i,x_{i+1}]$, where the $x_i$ are chosen randomly according to the method described later in Definition~\ref{def:deepening}. The subsets are then searched one at a time, in increasing order of distance from $O$. Note that after visiting all the vertices in $\mathcal{V}_1 \cup \ldots \cup \mathcal{V}_j$ we can contract all the edges searched so far to the root vertex $O$ and consider the problem of how to search the induced subtree $G_{i+1}$ with vertex set $\mathcal{V}_{j+1} \cup O$.

The method of searching each of these subtrees is according to a {\em random depth-first search} (or {\em RDFS}), which we define as follows. Given a set of vertices $H$ and a depth-first search $S$ of $H$ starting at the root, consider the search $S^{-1}$ which is the depth-first search of $H$ that arrives at the leaf vertices of $H$ in the reverse order to $S$. An equiprobable choice of $S$ and $S^{-1}$ is a RDFS of $H$. It is straightforward to calculate the maximum expected search time of a RDFS.

\begin{lemma} \label{lemma:RDFS}
    Under any RDFS of a rooted tree $H$, a vertex $v$ is found in expected time $(\lambda(H)+d(v))/2$.
\end{lemma}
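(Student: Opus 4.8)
The plan is to reduce the statement to an exact computation of the deterministic search time of $v$ under each of the two searches the RDFS chooses between, namely a depth-first search $S$ of $H$ and the depth-first search $S^{-1}$ that reaches the leaves in the reverse order (equivalently, $S^{-1}$ reverses the order in which the children of every vertex are explored). I claim that $T(S,v)+T(S^{-1},v)=\lambda(H)+d(v)$, after which the lemma follows immediately since the RDFS selects $S$ and $S^{-1}$ each with probability $1/2$. The intuition is that when a depth-first search first reaches a leaf $v$, the set of edges it has searched is exactly the $O$--$v$ path $P_v$ (which has total length $d(v)$ and must be traversed to get to $v$) together with all the branches hanging off the internal vertices of $P_v$ that $S$ chooses to explore \emph{before} continuing along $P_v$; everything else in $H$ is searched only afterwards. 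Replacing $S$ by $S^{-1}$ flips ``before'' and ``after'' at every vertex of $P_v$, so the branches searched before reaching $v$ under $S^{-1}$ are precisely those searched after reaching $v$ under $S$. Since for a leaf $v$ the path $P_v$, the ``before'' branches and the ``after'' branches partition the edge set of $H$, the two branch contributions sum to $\lambda(H)-d(v)$, and hence $T(S,v)+T(S^{-1},v)=2d(v)+(\lambda(H)-d(v))=\lambda(H)+d(v)$.

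To carry this out rigorously I would prove the identity by structural induction on the rooted tree $H$. Decompose $H$ at its root $O$ into pendant subtrees $H_1,\dots,H_k$ attached by edges $f_1,\dots,f_k$, and assume without loss of generality that $S$ visits these subtrees in the order $H_1,\dots,H_k$, so that $S^{-1}$ visits them in the order $H_k,\dots,H_1$; write $S_i$ for the depth-first search $S$ induces on $H_i$, whose reverse $S_i^{-1}$ is the search $S^{-1}$ induces on $H_i$. A leaf $v$ of $H$ lies in a unique $H_i$ and is a leaf of $H_i$, and
\[
T(S,v)=\sum_{j<i}\bigl(\lambda(f_j)+\lambda(H_j)\bigr)+\lambda(f_i)+T(S_i,v),\qquad
T(S^{-1},v)=\sum_{j>i}\bigl(\lambda(f_j)+\lambda(H_j)\bigr)+\lambda(f_i)+T(S_i^{-1},v).
\]
Adding these, using $\sum_{j\neq i}(\lambda(f_j)+\lambda(H_j))=\lambda(H)-\lambda(f_i)-\lambda(H_i)$ and the inductive hypothesis $T(S_i,v)+T(S_i^{-1},v)=\lambda(H_i)+d_{H_i}(v)$ (with $d_{H_i}(v)$ the distance from the root of $H_i$ to $v$), and then $d(v)=\lambda(f_i)+d_{H_i}(v)$, yields exactly $T(S,v)+T(S^{-1},v)=\lambda(H)+d(v)$. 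The base case is a single edge, where $v$ is the non-root endpoint and is found at time $\lambda(H)=d(v)$ by both searches, so the identity holds. Dividing by two finishes the proof.

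The one point requiring care — and the reason I would favour the induction over the direct ``branch-counting'' argument — is the claim that $S^{-1}$ swaps the before- and after-branches simultaneously at every vertex of $P_v$: this is where one uses that visiting the leaves in reverse order is the same as reversing the child-order everywhere, and the induction discharges it cleanly by invoking the reversal only at the root of each subtree. I would also flag that the identity is tight precisely when $v$ is a leaf of $H$: for an internal vertex the subtree $H_v$ strictly below $v$ is searched after $v$ by both $S$ and $S^{-1}$, so the same computation gives $T(S,v)+T(S^{-1},v)=\lambda(H)+d(v)-\lambda(H_v)$, and an arbitrary vertex is therefore found in expected time \emph{at most} $(\lambda(H)+d(v))/2$ — which is all that is needed for the later analysis of the randomized search strategy. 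Finally, note that the argument uses nothing about the edge weights beyond additivity of $\lambda(\cdot)$, so the unweighted-graph and weighted-tree cases are covered uniformly.
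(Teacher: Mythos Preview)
Your proof is correct and follows essentially the same approach as the paper: both arguments establish the identity $T(S,v)+T(S^{-1},v)=\lambda(H)+d(v)$ and then divide by two. The paper packages this more compactly by letting $A$ and $B$ be the edge sets searched by $S$ and $S^{-1}$ up to reaching $v$, noting $A\cap B$ is the $O$--$v$ path and $A\cup B=H$, and applying $\lambda(A)+\lambda(B)=\lambda(A\cup B)+\lambda(A\cap B)$; your induction unpacks the same content while discharging more carefully the claim that $S^{-1}$ flips the before/after branches at every vertex of $P_v$. Your observation that the equality holds only for leaves (with an inequality for internal vertices, since the subtree below $v$ lies in neither $A$ nor $B$) is a genuine refinement that the paper's proof elides.
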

\begin{proof}
Suppose $S$ is some depth-first search of $H$. Let $A$ be the subset of edges searched by $S$ up to and including when $v$ is reached and let $B$ be the subset of edges searched by $S^{-1}$ up to and including when $v$ is reached. It is easy to see that $A \cap B$ is the unique path from $v$ to the root. If $s$ is the RDFS that chooses $S$ and $S^{-1}$ equiprobably, then the expected time $T(s,v)$ of $s$ to reach $v$ is
\begin{align*}
T(s,v) & = (\lambda(A)+\lambda(B))/2 \\
& = (\lambda(A \cup B) + \lambda(A \cap B))/2 \\
& = (\lambda(H) + d(v)/2.
\end{align*}
\end{proof}
\smallskip

We can now define the randomized deepening strategy. Let $t$ be the smallest integer such that every vertex of $G$ is at distance less than $2^t$ from $O$.

\begin{definition} [\textbf{Randomized deepening strategy}]
    \label{def:deepening}
    Suppose $G$ is a tree with root $O$. For $i=1,\ldots, t$, choose some $x_i$ uniformly at random from the interval $[2^{i-1},2^i]$ and let $x_0=1$ and $x_{t+1}=2^t$. For $i=0,\ldots,t$, let $\mathcal{V}_i$ be the set of vertices of $G$ whose distance from $O$ lies in the interval $[x_i,x_{i+1})$. We call $\mathcal{V}_0,\ldots,\mathcal{V}_t$ the {\em levels} of the search, so that $\mathcal{V}_i$ is level $i$ of the search. Let $G_0$ be the induced subtree of $G$ with vertex set $\mathcal{V}_0 \cup O$ and we define $G_i, i>0$ recursively as the induced subtree with vertex set $\mathcal{V}_i \cup O$ of the graph obtained by contracting $G_0 \cup \ldots \cup G_{i-1}$ to the root $O$. The \textbf{randomized deepening strategy} performs a RDFS of each of the trees $G_i$ in the order $G_0,\ldots,G_t$.
\end{definition}

We need two straight-forward results before stating and proving the main theorem in this section, that the random deepening strategy has approximation ratio $5/4$. Let $\mathcal{A}_i$ be set of vertices of $G$ whose distance from $O$ is in the interval $[2^{i-1},2^i)$, and let $\mathcal{A}^i = \cup_{j \le i} \mathcal{A}_j$. (For the purposes of writing the proof of Theorem~\ref{thm:main-approx} we allow $i$ to take any integer value, but note that $\mathcal{A}_i$ is only non-empty for $i=1,\ldots,t$.)

\begin{lemma} \label{lemma:tech}
    The expected sum over all vertices $v \in \mathcal{A}_i \cap \mathcal{V}_{i-1}$ of edge lengths $\lambda_v$ is $2 \lambda(\mathcal{A}_i) - \Delta(\mathcal{A}_i)/2^{i-1}$.
\end{lemma}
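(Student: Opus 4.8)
The plan is to compute the expectation by linearity, reducing the whole claim to a single per-vertex probability. Fix an index $i$; outside the range $1 \le i \le t$ the set $\mathcal{A}_i$ is empty and both sides of the identity vanish, so assume $1 \le i \le t$. Take a vertex $v \in \mathcal{A}_i$, so that $d(v) \in [2^{i-1},2^i)$ by definition of $\mathcal{A}_i$. The first step is to observe that $v$ lands in level $\mathcal{V}_{i-1}$ if and only if $d(v) < x_i$. Indeed, the cut points satisfy $x_{i-1} \le 2^{i-1}$ deterministically (with $x_0 = 1 = 2^0$ handling $i=1$), so $d(v) \ge 2^{i-1} \ge x_{i-1}$ always, which means $v$ is never assigned to $\mathcal{V}_{i-2}$ or an earlier level; hence $v \in \mathcal{V}_{i-1}$ exactly when $d(v)$ is below the right endpoint $x_i$ of the interval $[x_{i-1},x_i)$ defining level $i-1$.

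Next I would evaluate this probability. Since $x_i$ is drawn uniformly from $[2^{i-1},2^i]$, an interval of length $2^{i-1}$, and $d(v)$ lies in this interval, we get $\Pr[v \in \mathcal{V}_{i-1}] = \Pr[x_i > d(v)] = (2^i - d(v))/2^{i-1}$. Summing over $v \in \mathcal{A}_i$ with weights $\lambda_v$ and applying linearity of expectation then gives
\[
\mathbb{E}\!\left[\sum_{v \in \mathcal{A}_i \cap \mathcal{V}_{i-1}} \lambda_v\right] = \sum_{v \in \mathcal{A}_i} \lambda_v \cdot \frac{2^i - d(v)}{2^{i-1}} = 2\sum_{v \in \mathcal{A}_i} \lambda_v - \frac{1}{2^{i-1}}\sum_{v \in \mathcal{A}_i} \lambda_v\, d(v),
\]
which is precisely $2\lambda(\mathcal{A}_i) - \Delta(\mathcal{A}_i)/2^{i-1}$ by the definitions of $\lambda(\mathcal{A}_i)$ and $\Delta(\mathcal{A}_i)$.

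The only delicate point — more a matter of care than a genuine obstacle — is the membership characterization in the first step: one must verify that a vertex of $\mathcal{A}_i$ cannot be placed in any level other than $\mathcal{V}_{i-1}$ or $\mathcal{V}_i$, keeping track of the half-open intervals $[x_j,x_{j+1})$, and check that the fixed boundary values do not spoil the argument at the ends of the range. Once that bookkeeping is done, the remaining computation is the elementary manipulation displayed above.
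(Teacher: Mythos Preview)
Your proof is correct and follows essentially the same approach as the paper: compute $\Pr[v \in \mathcal{V}_{i-1}] = (2^i - d(v))/2^{i-1}$ for each $v \in \mathcal{A}_i$ and then apply linearity of expectation. You are simply more explicit about the membership characterization and the boundary cases than the paper's terse two-line argument.
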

\begin{proof}
The probability that the distance $d(v)$ from $O$ of a vertex $v$ in $\mathcal{A}_i$ is less than $x_i$ is $(2^i-d(v))/2^{i-1}$. So the expected sum over all vertices $v \in \mathcal{A}_i \cap \mathcal{V}_{i-1}$ of lengths $\lambda_v$ is
\[
\sum_{v \in \mathcal{A}_i} \left( \frac{2^i - d(v)}{2^{i-1}} \right) \lambda(v) = 2 \lambda(\mathcal{A}_i) - \Delta(\mathcal{A}_i)/2^{i-1}.
\]
\end{proof}
\smallskip

We also make two simple observations about the parameters $\Delta(\mathcal{A}^i)$.

\begin{lemma} \label{lemma:Delta}
    For any $i=1,\ldots,t$ we have
    \begin{enumerate}
        \item[(i)] $\Delta(\mathcal{A}^i) \le 2^i \lambda(\mathcal{A}^i)$ and
        \item[(ii)] $\frac{\lambda(\mathcal{A}^i) - \Delta(\mathcal{A}^i)/2^i}{\rho} \le 2^{i-1}$.
    \end{enumerate}
\end{lemma}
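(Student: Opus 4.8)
The plan is to dispatch part (i) by a one-line estimate and to reduce part (ii) to the lower bound on $\rho$ supplied by Lemma~\ref{lemma:good-lb} together with an elementary algebraic identity.

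For (i): by definition of the sets $\mathcal{A}_j$, every $v \in \mathcal{A}^i = \bigcup_{j \le i}\mathcal{A}_j$ satisfies $d(v) < 2^i$, hence $\lambda_v d(v) \le 2^i \lambda_v$. Summing this over all $v \in \mathcal{A}^i$ gives $\Delta(\mathcal{A}^i) = \sum_{v \in \mathcal{A}^i} \lambda_v d(v) \le 2^i \sum_{v \in \mathcal{A}^i}\lambda_v = 2^i \lambda(\mathcal{A}^i)$.

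For (ii): I would first note the trivial case $\mathcal{A}^i = \emptyset$, where the numerator on the left is $0$ and the inequality holds (recall $\rho \ge 1 > 0$). Assuming $\mathcal{A}^i \ne \emptyset$, apply Lemma~\ref{lemma:good-lb} with $\mathcal{A} = \mathcal{A}^i$ and the Hider strategy $p$ described there; combined with the minimax inequality $\rho(G) \ge \min_{S \in \mathcal{S}} \hat{T}(S,p)$ noted in Section~\ref{sec:preliminaries}, this yields $\rho \ge \lambda(\mathcal{A}^i)^2 / (2\Delta(\mathcal{A}^i))$. Writing $L = \lambda(\mathcal{A}^i) > 0$ and $D = \Delta(\mathcal{A}^i) > 0$, it therefore suffices to verify $L - D/2^i \le 2^{i-1}\cdot L^2/(2D)$. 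Clearing denominators by multiplying through by $4 \cdot 2^i D > 0$, this is equivalent to $4\cdot 2^i D L - 4D^2 \le (2^i)^2 L^2$, i.e. to $(2^i L - 2D)^2 \ge 0$, which is immediate.

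The only point that takes a moment's thought is the reduction in (ii): one has to recognise that, after clearing denominators, the desired inequality collapses to a perfect square, and that the bound on $\rho$ coming from Lemma~\ref{lemma:good-lb} is exactly sharp enough for this to work. Beyond that observation there is no real obstacle, and part (i) is completely routine.
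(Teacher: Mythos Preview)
Your proof is correct and follows essentially the same route as the paper: part (i) is the same one-line estimate, and for part (ii) both arguments bound $\rho$ below by $\lambda(\mathcal{A}^i)^2/(2\Delta(\mathcal{A}^i))$ via Lemma~\ref{lemma:good-lb} and then verify the resulting algebraic inequality. The only cosmetic difference is that the paper writes the residual expression as a quadratic $2x - 2^{1-i}x^2$ in $x=\Delta(\mathcal{A}^i)/\lambda(\mathcal{A}^i)$ and maximises it, whereas you clear denominators and recognise a perfect square; these are the same computation, with the equality case $\Delta(\mathcal{A}^i)/\lambda(\mathcal{A}^i)=2^{i-1}$ appearing in both.
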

\begin{proof}
Item (i) follows from the fact that every vertex in $\mathcal{A}^i$ is at distance no further than $2^i$ from $O$.

For item (ii), we use Lemma~\ref{lemma:good-lb} to bound $\rho$, giving
\[
\frac{\lambda(\mathcal{A}^i) - \Delta(\mathcal{A}^i)/2^i}{\rho} \le \frac{\lambda(\mathcal{A}^i) - \Delta(\mathcal{A}^i)/2^i}{\lambda(\mathcal{A}^i)^2/(2 \Delta(\mathcal{A}^i))} = 2 \left( \frac{\Delta(\mathcal{A}^i)}{\lambda(\mathcal{A}^i)} \right) - 2^{1-i} \left( \frac{\Delta(\mathcal{A}^i)}{\lambda(\mathcal{A}^i)} \right) ^2.
\]
The right-hand side of the expression above is a quadratic in $\Delta(\mathcal{A}^i)/\lambda(\mathcal{A}^i)$, which is easily shown to be maximized at $\Delta(\mathcal{A}^i)/\lambda(\mathcal{A}^i) = 2^{i-1}$, where it takes a value of $2^{i-1}$.
\end{proof}
\smallskip

\begin{theorem} \label{thm:main-approx}
    Let $G$ be a weighted tree or an unweighted graph. Let $s$ be the randomized deepening strategy on $G$ if $G$ is a tree, or on some shortest path tree of $G$ if $G$ is an unweighted graph. Then the approximation ratio of $s$ is asymptotically $5/4$. In particular, $\rho_s \le (5/4) \rho +1/2$.
\end{theorem}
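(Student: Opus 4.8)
The plan is to bound $\hat T(s,v)$ separately for every non-root vertex $v$. First I would fix $v$ with $d(v)\in[2^{m-1},2^m)$ and set $a=d(v)/2^{m-1}\in[1,2)$. Since $x_{m-1}\le 2^{m-1}\le d(v)<2^m\le x_{m+1}$, the vertex $v$ falls into level $\mathcal{V}_{m-1}$ exactly when $d(v)<x_m$, an event of probability $2-a$, and into $\mathcal{V}_m$ otherwise, with probability $a-1$; these two cases are complementary. Writing $\Lambda(r)=\sum_{w\,:\,d(w)<r}\lambda_w$, the structural fact I would exploit is that $G_0\cup\dots\cup G_{k-1}$ is precisely the minimal subtree of $G$ spanning all vertices at distance less than $x_k$, so a full search of levels $0,\dots,k-1$ costs time $\Lambda(x_k)$, while $\lambda(G_k)=\Lambda(x_{k+1})-\Lambda(x_k)$. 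Applying Lemma~\ref{lemma:RDFS} to the RDFS of $G_k$, whose contracted root-to-$v$ distance is at most $d(v)$, gives for $v$ in level $k\in\{m-1,m\}$
\[
\mathbb{E}[T(s,v)\mid\text{levels}]\;\le\;\tfrac12\bigl(\Lambda(x_k)+\Lambda(x_{k+1})+d(v)\bigr).
\]
Averaging over $x_{m-1},x_m,x_{m+1}$, and using that the level indicator depends only on $x_m$ while $\Lambda(x_{m-1})$ and $\Lambda(x_{m+1})$ are independent of it, I get
\[
\mathbb{E}[T(s,v)]\;\le\;\tfrac12 d(v)+\tfrac12 E_m+\tfrac{2-a}{2}E_{m-1}+\tfrac{a-1}{2}E_{m+1},\qquad E_i:=\mathbb{E}[\Lambda(x_i)].
\]

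The heart of the argument is the identity $E_i=2M_i-M_{i-1}$, where $M_i:=\lambda(\mathcal{A}^i)-\Delta(\mathcal{A}^i)/2^i$; I would obtain this from Lemma~\ref{lemma:tech} (which gives the expected edge-length contribution of the vertices of $\mathcal{A}_i$ that land in level $i-1$) for $1\le i\le t$, and directly for $i\in\{0,t+1\}$ since $\Lambda$ is constant on the relevant ranges. Two facts about $M_i$ are needed: $M_i\ge 0$, which is Lemma~\ref{lemma:Delta}(i), and $M_i\le 2^{i-1}\rho$, which is Lemma~\ref{lemma:Delta}(ii) and which remains valid for $i=t+1$ because then $\mathcal{A}^{t+1}$ is the whole vertex set and every vertex lies within distance $2^t<2^{t+1}$. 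Substituting the identity, the combination $\tfrac12 E_m+\tfrac{2-a}{2}E_{m-1}+\tfrac{a-1}{2}E_{m+1}$ should collapse to
\[
\tfrac{3-a}{2}M_m\;+\;\tfrac{3-2a}{2}M_{m-1}\;-\;\tfrac{2-a}{2}M_{m-2}\;+\;(a-1)M_{m+1}.
\]

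It then remains to show this is at most $\tfrac54\rho\,d(v)=\tfrac54\rho\,a\,2^{m-1}$. Since $a\in[1,2]$, the coefficient of $M_{m-2}$ is nonpositive while those of $M_m$ and $M_{m+1}$ are nonnegative, so I would drop $M_{m-2}$ and apply $M_m\le 2^{m-1}\rho$, $M_{m+1}\le 2^m\rho$; for $M_{m-1}$ I would split on the sign of $3-2a$, using $M_{m-1}\le 2^{m-2}\rho$ when $a\le 3/2$ and dropping $M_{m-1}$ when $a>3/2$. In each case the bound is linear in $a$ and the required inequality reduces to $a\ge 1$ (for $a\le 3/2$) and to $a\le 2$ (for $a>3/2$), both of which hold. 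Hence $\hat T(s,v)\le \tfrac54\rho+\tfrac12$ for every $v$, so $\rho_s\le \tfrac54\rho+\tfrac12$, and the approximation ratio tends to $5/4$ as $\rho\to\infty$.

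The step I expect to be the main obstacle is setting up the conditional expected search time correctly: recognizing the cost of the fully searched levels as $\Lambda(x_k)$ (which rests on $G_0\cup\dots\cup G_{k-1}$ being exactly the minimal subtree within distance $x_k$), keeping track of the contracted root-to-$v$ distances so that Lemma~\ref{lemma:RDFS} can be invoked verbatim on $G_k$, and checking that discarding the saving obtained from this contracted distance being at most $d(v)$ is harmless — the tight instances being exactly those in which a long branch containing $v$ hangs directly off the root. Once the expression in terms of the $E_i$ is in hand, the identity $E_i=2M_i-M_{i-1}$ together with the estimates from Lemmas~\ref{lemma:tech} and~\ref{lemma:Delta} makes the rest a routine two-case calculation.
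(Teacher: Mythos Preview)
Your proposal is correct and follows essentially the same route as the paper: both arguments reduce the expected search time at $v\in\mathcal{A}_m$ to $d(v)/2$ plus a linear combination of the quantities $M_j=\lambda(\mathcal{A}^j)-\Delta(\mathcal{A}^j)/2^j$ for $j\in\{m-2,m-1,m,m+1\}$, then bound each $M_j$ via Lemma~\ref{lemma:Delta} with the same case split at $a=3/2$ (equivalently $d=3\cdot 2^{m-2}$). Your packaging through $E_i=\mathbb{E}[\Lambda(x_i)]$ and the identity $E_i=2M_i-M_{i-1}$, together with the law-of-total-expectation observation that the $\Lambda(x_m)$ contribution collapses to $\tfrac12 E_m$, is a tidy reorganization of the paper's direct computation of $L_1,L_2,L_3$, but the resulting expression and the final inequalities are identical.
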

\begin{proof}
First suppose $G$ is a tree. Let $v$ be a vertex that maximizes the randomized search ratio of $s$ and suppose $v$ is at distance $d$ from $O$. Let $L$ be the expected sum over all vertices $u$ in previous levels from $v$ of the lengths $\lambda_u$ plus half the expected sum over all vertices $u$ in the same level as $v$ of the lengths $\lambda_u$. By Lemma~\ref{lemma:RDFS}, the expected search time of $v$ is at most $L+d/2$. Hence
\[
\frac{\rho_s}{\rho} \le \frac{(L+d/2)/d}{\rho} \le \frac{L/d}{\rho} + 1/(2\rho).
\]
We just have to show that $L/(d \rho) \le 5/4$. Suppose $v \in \mathcal{A}_k$ for some $k$ and let $L_1,L_2,L_3$ be the contributions to $L$ from vertices in $\mathcal{A}_{k-1},\mathcal{A}_{k},\mathcal{A}_{k+1}$, respectively, so that $L = \lambda(\mathcal{A}^{k-2}) + L_1 + L_2 + L_3$. We calculate $L_1$, $L_2$ and $L_3$ separately.

For $L_1$, observe that with probability $(d-2^{k-1})/2^{k-1}$ the vertex $v$ is in level $k$, which does not intersect with any vertices in $\mathcal{A}_{k-1}$. Otherwise, $v$ is in level $k-1$ which contains some vertices of $\mathcal{A}_{k-1}$ and by Lemma~\ref{lemma:tech},
\begin{align*}
L_1 &=\left( \frac{d-2^{k-1}}{2^{k-1}} \right)\lambda(\mathcal{A}_{k-1}) + \left( \frac{2^k - d}{2^{k-1}} \right) (\lambda(\mathcal{A}_{k-1})/2 + (2\lambda(\mathcal{A}_{k-1}) - \Delta(\mathcal{A}_{k-1})/2^{k-2}))/2 \\
&=  \left(2-\frac{d}{2^k}\right)\lambda(\mathcal{A}_{k-1}) - \left(2-\frac{d}{2^{k-1}}\right)\frac{\Delta(\mathcal{A}_{k-1})}{2^{k-1}}.
\end{align*}
Similarly, for $L_3$, if $v$ is in level $k-1$ then all vertices of $\mathcal{A}_{k+1}$ are in levels after the level of $v$. Otherwise $v$ is in level $k$ which contains some vertices of $\mathcal{A}_{k+1}$. Again applying Lemma~\ref{lemma:tech},
\begin{align*}
L_3 & = \left( \frac{d}{2^{k-1}} - 1 \right) \left( \lambda(\mathcal{A}_{k+1}) - \frac{\Delta(\mathcal{A}_{k+1})}{2^{k+1}}\right).
\end{align*}
Lastly, for $L_2$, observe that a vertex $u$ in $\mathcal{A}_k$ at distance $d(u) \le d$ is in the level before $v$ if $d(u) \le x_k <d$, otherwise it is in the same level as $v$. So the contribution $u$ makes to $L_2$ is
\[
\left( \frac{d-d(u)}{2^{k-1}} \right) \cdot \lambda_u  + \left( \frac{2^{k-1} - (d-d(u))}{2^{k-1}} \right) \cdot \frac{\lambda_u}{2} = \frac{(d-d(u)+2^{k-1})\lambda_u}{2^k}.
\]
Similarly, if $d(u) > d$, then $u$ is in the level after $v$ if $d \le x_k <d(u)$, otherwise $u$ is in the same level as $v$, so the contribution $u$ makes to $L_2$ is
\[
\left( \frac{d(u)-d}{2^{k-1}} \right) \cdot 0  + \left( \frac{2^{k-1} - (d(u)-d)}{2^{k-1}} \right) \cdot \frac{\lambda}{2} = \frac{(d-d(u)+2^{k-1})\lambda_u}{2^k},
\]
which is the same.  Hence $L_2$ is given by
\[
L_2 = \sum_{u \in \mathcal{A}_{k}} \frac{(d-d(u)+2^{k-1})\lambda_u}{2^k} = \left( \frac{d}{2^k} + \frac{1}{2} \right) \lambda(\mathcal{A}_k) - \frac{\Delta(\mathcal{A}_k)}{2^k}.
\]
Using $L = \lambda(\mathcal{A}_{k-2}) + L_1 + L_2 + L_3$, combining our expressions for $L_1$, $L_2$ and $L_3$ and rearranging, we obtain
\begin{align*}
L &= \left( 1 - \frac{d}{2^k} \right) \left( \frac{\Delta(\mathcal{A}^{k-2})}{2^{k-2}} - \lambda(\mathcal{A}^{k-2}) \right)
+ \left( \frac{3}{2} - \frac{d}{2^{k-1}} \right) \left( \lambda(\mathcal{A}^{k-1}) - \frac{\Delta(\mathcal{A}^{k-1})}{2^{k-1}} \right) \\
&+ \left( \frac{3}{2} - \frac{d}{2^{k}} \right) \left( \lambda(\mathcal{A}^k) - \frac{\Delta(\mathcal{A}^k)}{2^{k}} \right)
+ \left( \frac{d}{2^{k-1}} - 1 \right) \left( \lambda(\mathcal{A}^{k+1}) - \frac{\Delta(\mathcal{A}^{k+1})}{2^{k+1}} \right).
\end{align*}
The first term in the expression on the right-hand side above is non-positive, since $d \le 2^k$ and $\Delta(\mathcal{A}^{k-2}) \le 2^{k-2} \lambda(\mathcal{A}^{k-2})$, by Lemma~\ref{lemma:Delta}(i). So, dividing by $d$, we obtain
\begin{align}
\frac{L}{d} & \le
\left( \frac{3}{2d} - \frac{1}{2^{k-1}} \right)  \left( \lambda(\mathcal{A}^{k-1}) - \frac{\Delta(\mathcal{A}^{k-1})}{2^{k-1}} \right) \nonumber
+ \left( \frac{3}{2d} - \frac{1}{2^{k}} \right) \left( \lambda(\mathcal{A}^k) - \frac{\Delta(\mathcal{A}^k)}{2^{k}} \right) \\
&   + \left( \frac{1}{2^{k-1}} - \frac{1}{d} \right) \left( \lambda(\mathcal{A}^{k+1}) - \frac{\Delta(\mathcal{A}^{k+1})}{2^{k+1}} \right). \label{eq:L/d}
\end{align}
If $2^{k-1} \le d \le 3 \cdot 2^{k-2}$ then it follows from Lemma~\ref{lemma:Delta}(i) that all three of the terms on the right-hand side of (\ref{eq:L/d}) are non-negative. Hence by Lemma~\ref{lemma:Delta}(ii),
\begin{align*}
\frac{L}{d\rho} & \le
\left( \frac{3}{2d} - \frac{1}{2^{k-1}} \right) 2^{k-2}
+ \left( \frac{3}{2d} - \frac{1}{2^{k}} \right) 2^{k-1}
+ \left( \frac{1}{2^{k-1}} - \frac{1}{d} \right) 2^k \\
& = 1+ 2^{k-3}/d \\
& \le 5/4 \mbox{ (maximized when $d = 2^{k-1}$).}
\end{align*}
If $2^k \ge d > 3 \cdot 2^{k-2}$ then the first term on the right-hand side of (\ref{eq:L/d}) is negative but the other two terms are non-negative by Lemma~\ref{lemma:Delta}(i), so by Lemma~\ref{lemma:Delta}(ii),
\begin{align*}
\frac{L}{d\rho} & \le
\left( \frac{3}{2d} - \frac{1}{2^{k}} \right) 2^{k-1}
+ \left( \frac{1}{2^{k-1}} - \frac{1}{d} \right) 2^k \\
& = 3/2- 2^{k-2}/d \\
& \le 5/4 \mbox{ (maximized when $d = 2^{k}$).}
\end{align*}
This completes the proof in the case that $G$ is a tree. If $G$ is not a tree, then we remove edges from $G$ until obtaining a shortest path tree. Note that removing these edges has no effect on $\lambda(v)$ and $d(v)$ for vertices $v$, so the lower bounds given by Lemma \ref{lemma:good-lb} remain unchanged. Hence we can apply the same argument as above, implementing the randomized deepening strategy on the shortest path tree of $G$, to obtain an approximation ratio asymptotically equal to $5/4$.
\end{proof}
\smallskip

We observe that the ratio $5/4$ could be improved slightly by introducing some randomization into the definition
of $\mathcal{A}_1,\ldots,\mathcal{A}_t$: that is, we could define $\mathcal{A}_i$ as the set of all edges whose
length $d$ satisfies $2^{i-1-\theta} \le d < 2^{i-\theta}$, where $\theta$ is chosen according to some
probability distribution on $[0,1]$. This would improve the approximation ratio from $5/4$, but only marginally.

We may compare this result with analogous results from \cite{koutsoupias:fixed} in the context of pathwise search: for unweighted graphs they obtain algorithms that approximate the deterministic and randomized search ratio within a factor of 6 and 8.98 respectively. For expanding search, we easily obtain the optimal strategy in the deterministic case, and we obtain a 5/4-approximate strategy in the randomized case. Although our randomized strategy is somewhat more sophisticated, the difference must be in part due to the fact that expanding search is more straightforward to deal with than pathwise search.


\section{Bounding the Randomized Search Ratio}
\label{sec:star}

In this section we ask how large the randomized search ratio can be for a graph with $n$ vertices plus the root. First note that the equivalent question for the (deterministic) search ratio is easily settled. Indeed, for an arbitrary weighted graph with minimum edge length normalized to $1$, consider the search strategy that visits the vertices in non-decreasing order of their distance from the root. The normalized search time of the $j$th vertex to be visited is at most $j \le n$, so $\sigma \le n$. This is tight for a {\em uniform star}, that is a star with edges of equal length.

In contrast, it is not as easy to bound the {\em randomized} search ratio of an arbitrary weighted graph. Note that the randomized search ratio of a uniform star is $(n+1)/2$, since an optimal Searcher strategy is to search the edges in a uniformly random order and an optimal Hider strategy is to choose one of the $n$ vertices uniformly at random. In this section we will show that the uniform star is, in fact, the weighted graph with the largest randomized search ratio. 

The remainder of this section is structured as follows. We first prove the result for unweighted graphs using a simple, intuitive argument 
(see Proposition~\ref{prop:unweighted.upper.bound}). To obtain the result for general, weighted graphs, one needs to resort to some heavier machinery. 
This can be accomplished in two different ways, and we include both for completeness. First, we argue how the main result
of~\cite{Condon:2009:ADA:1497290.1497300} on the seemingly unrelated problem of {\em pipelined filter ordering} can yield the desired upper
bound (see Proposition~\ref{prop:weightedstar}). However, the resulting optimal strategy cannot be described succinctly, 
since~\cite{Condon:2009:ADA:1497290.1497300} applies an involved, flow-based approach. We give an alternative proof of 
Proposition~\ref{prop:weightedstar} that uses a much simpler and intuitive strategy (based from intuition obtained from the unweighted case), 
and a game-theoretic approach (this proof was obtained independently in the conference version of this paper~\cite{stacs-expanding}). 
Last, based on Proposition~\ref{prop:weightedstar}, we show our main result of the section, namely that every graph has randomized search 
ratio that does not exceed that of a uniform star on the same number of vertices (see Theorem~\ref{thm:upper.bound.randomized.general}).

We begin by first proving the result for unweighted graphs, as it is relatively easy in this case. 
\begin{proposition}
	Let $G$ be an unweighted graph with $n$ vertices plus the root $O$. The randomized search ratio $\rho$ of $G$ satisfies $\rho \le (n+1)/2$.
  \label{prop:unweighted.upper.bound}
\end{proposition}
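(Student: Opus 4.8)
The plan is to exhibit a single randomized search strategy $s$ on the unweighted graph $G$ and show that $\hat T(s,v) \le (n+1)/2$ for every non-root vertex $v$; by Definition~\ref{def:rho} this gives $\rho \le \rho_s \le (n+1)/2$. First I would fix a shortest-path tree $\mathcal{T}$ of $G$ rooted at $O$ (this does not change $d(v)$ for any $v$, and every search of $\mathcal{T}$ is a valid expanding search of $G$). The natural candidate for $s$ is to order the vertices of $\mathcal{T}$ by a uniformly random linear extension of the ancestor partial order — i.e., repeatedly pick uniformly at random among the currently ``available'' vertices (those whose parent edge has been searched) and search the edge leading to it. Equivalently, one can take the random-deepening flavour used in Section~\ref{sec:trees}: assign each vertex an independent uniform label in $[0,1]$ and search vertices in increasing label order subject to feasibility, which on a tree amounts to searching the edge into $v$ at ``time'' equal to the maximum label on the root-to-$v$ path. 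Either description makes the key quantity computable.

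The heart of the argument is a bound on $T(s,v)$, the expected number of edges searched up to and including the edge into $v$. Write $P_v$ for the set of (non-root) vertices on the shortest path from $O$ to $v$, so $|P_v| = d(v)$. For any other non-root vertex $u$, $u$ is searched before $v$ only if, among the vertices of $P_v \cup \{u\}$ that are ``comparable obstructions'', $u$'s label beats the relevant maximum; the clean statement is that $\Pr[u \text{ searched before } v] \le d(u)/(d(u)+1) \le n/(n+1)$ is too weak, so instead I would use the sharper fact that a vertex $u$ not on $P_v$ and not an ancestor of any vertex of $P_v$ is searched before $v$ with probability at most $1/2$ (its label must exceed the label-max on its own path \emph{and} we are conditioning symmetrically against $v$'s path), while the $d(v)$ vertices of $P_v$ are searched before $v$ with probability $1$. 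Summing, $T(s,v) \le d(v) + \tfrac12(n - d(v)) = \tfrac{n+d(v)}{2}$, exactly mirroring Lemma~\ref{lemma:RDFS}. Dividing by $d(v) \ge 1$ gives $\hat T(s,v) \le \tfrac{n+d(v)}{2d(v)} = \tfrac12 + \tfrac{n}{2d(v)} \le \tfrac{n+1}{2}$, with equality only when $d(v)=1$.

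The main obstacle is making the pairwise comparison probabilities rigorous: on a tree the events ``$u$ precedes $v$'' for different $u$ are not independent, and a vertex $u$ can block $v$ by being an ancestor of a vertex on $P_v$ (in which case it is forced, contributing to the $d(v)$ term) or be genuinely off to the side (probability $\le 1/2$). I would handle this by the standard exchangeable-labels coupling: fix the labels of $P_v \cup \{u\}$ and argue that reversing the label order of the leaves of the subtree containing $u$ swaps whether $u$ precedes $v$ while fixing the set of forced vertices — this is precisely the $S \leftrightarrow S^{-1}$ symmetry of an RDFS — so $\Pr[u \prec v] \le 1/2$ for each non-forced $u$. Once that lemma is in place the arithmetic above is immediate, and the uniform star shows the bound is tight.
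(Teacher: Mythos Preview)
Your target inequality $T(s,v)\le (n+d(v))/2$ is the right one, and once you have it the division by $d(v)\ge 1$ finishes the proof. The gap is that this inequality fails for the strategies you actually propose. Under either a uniformly random linear extension of the tree order or the max-label ordering you describe, the claim ``each vertex $u\notin P_v$ precedes $v$ with probability at most $1/2$'' is simply false. Take the path on vertices $O,a,b,c$ (in that order) together with a leaf $u$ attached to $O$, so $n=4$. With $v=c$ the only off-path vertex is $u$; under the max-label ordering $u$ precedes $c$ iff $X_u<\max(X_a,X_b,X_c)$, which has probability $3/4$, and under ``repeatedly pick a uniform available vertex'' the probability is $7/8$. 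In both cases $T(s,c)>\tfrac{7}{2}=\tfrac{n+d(c)}{2}$, so the intermediate bound you rely on does not hold. The $S\leftrightarrow S^{-1}$ coupling you invoke is a symmetry of the \emph{two}-point support of an RDFS, not a measure-preserving involution on random labellings or on the set of linear extensions; it does not transfer to your strategies.

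The repair is immediate and in fact yields a shorter proof than the paper's: abandon the random linear extension and run a single RDFS on the shortest-path tree $\mathcal{T}$. Lemma~\ref{lemma:RDFS} then gives $T(\mathrm{RDFS},v)=(\lambda(\mathcal{T})+d(v))/2=(n+d(v))/2$ exactly, since $\lambda(\mathcal{T})=n$ in the unweighted case, and your last line goes through verbatim. For comparison, the paper's own argument is quite different: it proceeds by induction on $n$, removing a vertex $v^*$ at maximum distance, taking an optimal randomized search $s$ of the remaining graph, and mixing with probability $p=1/(2d(v^*))$ the strategy ``shortest path to $v^*$, then $s$'' against ``$s$, then $v^*$''.
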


\begin{proof}
	The proof is by induction (the case $n=1$ is trivial). Assume it is true for graphs with $n$ or fewer vertices plus the root, and suppose $G$ has $n+1$ vertices. We must show that $\rho(G) \le (n+2)/2$. Let $G'$ be the subgraph of $G$ formed by removing an arbitrary vertex $v^*$ of $G$ at maximum distance from $O$ and all the edges incident to $v^*$, and let $s$ be an optimal randomized search of $G'$, so that $\hat{T}(s, v) \le (n+1)/2$ for any vertex $v$ in $G'$, by the induction hypothesis.
	
	Let $s'$ be the randomized search of $G$ that follows a shortest path $P$ to $v^*$ then follows $s$ (omitting edges contained in $P$). Let $s''$ be the search that, with probability $p$ follows $s'$ and with probability $1-p$ follows $s$ then searches $v^*$, where $p=1/(2d')$ and $d'=d(v')$. Then for any vertex $v$ of $G'$,
	\[
	\hat{T}(s'',v) \le \frac{T(s,v) + pd'}{d(v)} \le \frac{n+1}{2} + \frac{d'}{2d'} = \frac{n+2}{2}.
	\]
	Also,
	\[
	\hat{T}(s'',v^*) = \frac{(1-p)(n+1) + pd'}{d'} = \left(n+\frac{3}{2}\right)\left( \frac{1}{d'} \right) - \left(\frac{n+1}{2}\right) \left(\frac{1}{d'^2} \right).
	\]
	It is easy to verify that the expression above is decreasing in $d'$, and at $d'=1$ is equal to $(n+2)/2$. Hence $\rho \le \rho_s(G) \le (n+2)/2$ and the proposition follows by induction.
\end{proof}
\smallskip

To prove the analogous result for weighted graphs, we first need to prove it for stars. The general result then follows as a corollary. On stars, the problem of finding the randomized search ratio is related to a problem in pipelined filter ordering, which we discuss in more detail in the 
the following subsection.

\subsection{Connection with pipelined filter ordering}
\label{subsec:pipelined}
In this section we discuss the connection between this work and a problem in pipelined filter ordering. As mentioned in the introduction, star search is connected to the work of \cite{Condon:2009:ADA:1497290.1497300}, who study an adversarial pipelined filter ordering problem. The model may be described as follows. An item, or {\em tuple} must be routed in some order through a set $\{O_1,\ldots, O_n\}$ of {\em operators}, each of which tests whether the tuple satisfies some predicate (or {\em filter}) of a conjunction. There is a known cost $c_i$ of each operator $O_i$ to process a tuple, and the tuple is routed through the operators until it fails one of the tests (and is eliminated) or it passes all of them. We may alternatively think of this as a product being subjected to several quality tests before being sent to the market.

In the problem of \cite{Condon:2009:ADA:1497290.1497300}, an adversary chooses the set of filters which will eliminate the tuple. The aim is to choose a randomized routing of the tuple to minimize the {\em multiplicative regret}: that is the expected ratio of the total cost of eliminating the tuple to the cost of eliminating the tuple if the adversary's choice were known {\em a priori}. The authors argue that we may as well assume the adversary chooses only one filter to eliminate the tuple, as he is not disadvantaged by this restriction. If the ordering chosen is (without loss of generality) $O_1,\ldots, O_n$ and the adversary chooses the filter corresponding to operator $O_j$, then the multiplicative regret is $(c_1+\cdots+c_j)/c_j$. It is easy to see that the problem is exactly equivalent to our problem of calculating the optimal randomized search for a star graph with edges of lengths $c_1,\ldots,c_n$.

\cite{Condon:2009:ADA:1497290.1497300} find a polynomial time algorithm for their problem, and we summarize their result in the context of the randomized search ratio of a star graph.

\begin{theorem}[From Section 4 of \cite{Condon:2009:ADA:1497290.1497300}]
    \label{thm:stars}
    Let $G$ be a star graph with root $O$ and vertices $v_1,\ldots,v_n$ at distances $c_1,\ldots,c_n$ from $O$, where the $c_i$ are non-decreasing. There is a polynomial-time search algorithm that calculates the optimal randomized search of $G$. The randomized search ratio $\rho(G)$ is given by
    \begin{align}
    \rho(G) = \max_{1 \le k \le n} \frac{\sum_{1 \le i \le j \le k} c_i c_j}{\sum_{i=1}^k c_i^2}. \label{eq:stars}
    \end{align}
\end{theorem}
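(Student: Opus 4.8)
The plan is to prove $\rho(G)\ge V$ and $\rho(G)\le V$ separately, where $V$ denotes the right-hand side of~(\ref{eq:stars}), and then to note that a Searcher strategy certifying the second inequality can be produced in polynomial time. For the lower bound I would feed prefix sets into Lemma~\ref{lemma:good-lb}. On a star, every non-root vertex $v_i$ has $\lambda_{v_i}=d(v_i)=c_i$, so with $\mathcal{A}=\{v_1,\dots,v_k\}$ the Hider distribution of Lemma~\ref{lemma:good-lb} picks $v_i$ with probability proportional to $c_i^2$, and the lemma gives $\min_{S\in\mathcal{S}}\hat{T}(S,p)\ge \big(\sum_{1\le i\le j\le k}c_ic_j\big)/\sum_{i=1}^k c_i^2$. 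By the minimax remark at the end of Section~\ref{sec:preliminaries} this lower-bounds $\rho(G)$, and maximising over $k$ yields $\rho(G)\ge V$. The identity $\sum_{1\le i\le j\le k}c_ic_j=\tfrac12\big((\sum_{i\le k}c_i)^2+\sum_{i\le k}c_i^2\big)$ rewrites $V$ as $\tfrac12+\tfrac12\max_k (\sum_{i\le k}c_i)^2/\sum_{i\le k}c_i^2$, which evaluates to $(n+1)/2$ on the uniform star and is a convenient sanity check.

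The content of the theorem is the matching upper bound together with the algorithmic claim, and here I would follow the flow-based route of Condon et al. The key reduction is that a randomized search of a star is just a distribution over orderings of the $n$ edges, and the expected cost of reaching $v_j$ depends only on the pairwise precedence probabilities $q_{ij}=\Pr[v_i\text{ searched before }v_j]$, namely $c_j+\sum_{i\ne j}c_i q_{ij}$. Hence a Searcher strategy of value at most $V$ exists iff the linear system $\sum_{i\ne j}c_i q_{ij}\le (V-1)c_j$ for all $j$ admits a solution $q$ realisable by an honest distribution over orderings. One solves for such a $q$ via a flow (or polynomial-size LP): informally, one identifies the bottleneck prefix $k^\ast$ attaining the maximum in~(\ref{eq:stars}), searches $v_1,\dots,v_{k^\ast}$ in a balanced random order so that each of those vertices has expected normalized search time exactly $V$ --- which is consistent precisely because $k^\ast$ is a tight set --- then contracts that block to the root and recurses on $v_{k^\ast+1},\dots,v_n$; the flow certifies that the balanced precedence pattern on each block is realisable, and an induction using the fact that $V$ is the \emph{global} maximum of the prefix ratios shows the later vertices also have expected normalized search time at most $V$. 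As a byproduct, $\rho(G)\le V$ combined with Lemma~\ref{lemma:good-lb} applied to arbitrary subsets $\mathcal{A}$ shows that the worst Hider subset can always be taken to be a prefix, so the maximum in~(\ref{eq:stars}) ranges only over $k$. Since~(\ref{eq:stars}) is a maximum over $n$ quantities and the flow instance has polynomial size, the construction runs in polynomial time.

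The main obstacle is exactly this realisability issue: it is trivial to write down numbers $q_{ij}$ meeting the bound $V$, but not every array with $q_{ij}+q_{ji}=1$ arises from a distribution over orderings (the relevant object is the linear ordering polytope), so one must either assemble the distribution explicitly block by block or certify its existence through the flow. An attractive alternative --- the one the paper adopts in Proposition~\ref{prop:weightedstar} --- is to sidestep the full polytope by restricting to a one-parameter family of strategies (for instance, draw independent priorities for the edges from suitably chosen distributions and search in increasing priority order, which yields $q_{ij}$ in closed form) and then prove optimality of the best member of this family directly by a game-theoretic complementary-slackness argument against the prefix Hider strategies used in the lower bound. Either way, once the optimal strategy is in hand the equality~(\ref{eq:stars}) and the polynomial-time claim both follow.
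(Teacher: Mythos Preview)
The paper does not prove this theorem; it is quoted as a result of Condon et al.\ and the paper explicitly writes, immediately after stating it, that ``the optimal strategy for the Searcher cannot be described so succinctly, and we refer to \cite{Condon:2009:ADA:1497290.1497300} for details of their algorithm.'' What the paper \emph{does} observe, matching your lower-bound half, is that the right-hand side of~(\ref{eq:stars}) is exactly the maximum over prefix sets $\mathcal{A}=\{v_1,\dots,v_k\}$ of the first bound in Lemma~\ref{lemma:good-lb}, so your derivation of $\rho(G)\ge V$ via that lemma is precisely the argument the paper points to. For the upper bound and the polynomial-time claim you give a plausible outline of the flow/precedence-probability reduction, but it remains an outline: the realisability step you flag as ``the main obstacle'' is where all the work lies, and you have not actually carried it out. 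So your proposal is correct in spirit for the lower bound and in intention for the upper bound, but the latter is a sketch rather than a proof, which is also the status the paper leaves it in.

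One small misattribution: you write that the one-parameter-family alternative is ``the one the paper adopts in Proposition~\ref{prop:weightedstar},'' but Proposition~\ref{prop:weightedstar} in fact \emph{uses} Theorem~\ref{thm:stars} (plugging into~(\ref{eq:stars}) and optimising). The explicit inductive strategy construction you have in mind is Theorem~\ref{thm:star-bound}, and note that it only establishes the weaker inequality $\rho\le(n+1)/2$, not the exact formula~(\ref{eq:stars}); so that alternative route, as carried out in the paper, does not by itself prove the present theorem.
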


Note that the right-hand side of~(\ref{eq:stars}) is the maximum over all choices of $\mathcal{A}=\{v_1,\ldots,v_k\}$ of the value of the first lower bound in Lemma~\ref{lemma:good-lb}. It follows that the optimal strategy of the Hider is to choose such a set $\mathcal{A}$ which maximizes this bound and hide at vertex $v_i \in \mathcal{A}$ with probability proportional to $c_i^2$. The optimal strategy for the Searcher cannot be described so succinctly, and we refer to \cite{Condon:2009:ADA:1497290.1497300} for details of their algorithm.


The relation here to pipelined order filtering means that our results of Section~\ref{sec:trees} apply to the following generalization of the model in \cite{Condon:2009:ADA:1497290.1497300}. Suppose the order that the tuple may be routed through the operators is subject to some precedence constraints. This is, there is a partial order $\prec$ on the set of operators such if $O_i \prec O_j$ then the tuple must be processed by $O_i$ before it can be processed by $O_j$. The partial order defines a directed acyclic graph, and if that graph is a tree, then the problem of finding the randomized routing of the tuple to minimize the expected regret is equivalent to our problem of finding the optimal randomized search on a tree. By Theorem~\ref{thm:main-approx}, the randomized deepening strategy has approximation ratio $5/4$ for this problem.

\subsection{Bounding the randomized search ratio for weighted graphs}

We now show how to apply the results of the previous subsection so as to bound the randomized search ratio for weighted stars; we also 
show that the bound holds for arbitrary weighted graphs.

\begin{proposition} \label{prop:weightedstar}
	Let $G$ be a weighted star with $n$ vertices plus the root. The randomized search ratio $\rho$ of $G$ satisfies $\rho \le (n+1)/2$.
\end{proposition}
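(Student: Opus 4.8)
The plan is to invoke the explicit formula for $\rho(G)$ supplied by Theorem~\ref{thm:stars}. Writing $c_1 \le \cdots \le c_n$ for the edge lengths of the star, that theorem gives
\[
\rho(G) = \max_{1 \le k \le n} \frac{\sum_{1 \le i \le j \le k} c_i c_j}{\sum_{i=1}^k c_i^2}.
\]
It therefore suffices to bound, for each fixed $k \in \{1,\dots,n\}$, the quantity $(\sum_{1 \le i \le j \le k} c_i c_j)/(\sum_{i=1}^k c_i^2)$ by $(n+1)/2$. In fact I would prove the sharper bound $(k+1)/2$; since $k \le n$ this implies the claim, and it also exhibits why the uniform star --- where the maximising index is $k=n$ --- is the extremal configuration.

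The key step is the elementary identity $\sum_{1 \le i \le j \le k} c_i c_j = \frac12 \big( (\sum_{i=1}^k c_i)^2 + \sum_{i=1}^k c_i^2 \big)$. Substituting it, the target inequality $\sum_{1 \le i \le j \le k} c_i c_j \le \frac{k+1}{2}\sum_{i=1}^k c_i^2$ rearranges to $(\sum_{i=1}^k c_i)^2 \le k \sum_{i=1}^k c_i^2$, which is exactly the Cauchy--Schwarz inequality applied to the vectors $(c_1,\dots,c_k)$ and $(1,\dots,1)$ (equivalently, $\mathrm{AM} \le \mathrm{QM}$). Equality holds precisely when all the $c_i$ are equal, matching the uniform star.

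There is essentially no technical obstacle once Theorem~\ref{thm:stars} is available: the argument is a two-line reduction to Cauchy--Schwarz, and it does not even use the ordering of the $c_i$. The genuine difficulty here is conceptual --- this route relies on the intricate flow-based algorithm of \cite{Condon:2009:ADA:1497290.1497300} that underlies the formula for $\rho(G)$, so it offers no intuition for why the uniform star is worst. The alternative I would also present (as the paper does) sidesteps Theorem~\ref{thm:stars}: one exhibits a concrete randomized Searcher strategy on the star --- searching the edges in a random order biased toward short edges, in the spirit of the inductive construction of Proposition~\ref{prop:unweighted.upper.bound} --- and bounds its expected normalised search time against every Hider vertex by $(n+1)/2$ directly, using the minimax theorem together with the lower bounds of Lemma~\ref{lemma:good-lb} to certify optimality. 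That computation is longer but self-contained, and it generalises transparently to arbitrary weighted graphs, yielding Theorem~\ref{thm:upper.bound.randomized.general}.
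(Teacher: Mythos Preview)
Your proof is correct and follows essentially the same route as the paper: both invoke Theorem~\ref{thm:stars}, apply the identity $\sum_{i\le j} c_i c_j = \tfrac12\big((\sum_i c_i)^2 + \sum_i c_i^2\big)$, and reduce to the inequality $(\sum_i c_i)^2 \le k\sum_i c_i^2$ (the paper phrases this as ``maximized when all the $c_i$'s are equal'' after normalizing $\sum c_i = 1$, which is the same Cauchy--Schwarz/QM--AM step you name explicitly). Your remark about the alternative self-contained argument also matches the paper's Theorem~\ref{thm:star-bound}.
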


\begin{proof}
	By Theorem~\ref{thm:stars}, the randomized search ratio is equal to the right-hand side of~(\ref{eq:stars}). Suppose this expression is maximized by $k=k^*$ and, without loss of generality, assume that $\sum_{i=1}^{k^*} c_i = 1$. Then
	\begin{align}
	\rho &= \frac{\sum_{1 \le i \le j \le k^*} c_i c_j}{\sum_{i=1}^{k^*} c_i^2} \nonumber\\
	&= \frac{\frac{1}{2}\left( \sum_{i = 1}^{k^*} c_i \right)^2  + \frac{1}{2} \sum_{i=1}^{k^*} c_i^2}{ \sum_{i=1}^{k^*} c_i^2} \nonumber \\
	&	=\frac{1}{2 \sum_{i=1}^{k^*} c_i^2} + \frac{1}{2}. \label{eq:starbound}
	\end{align} 
	The right-hand side of~(\ref{eq:starbound}) is maximized when all the $c_i$'s are equal to $1/k^*$, so that $\rho \le (k^*+1)/2 \le (n+1)/2$.
\end{proof}
\smallskip

Using Proposition~\ref{prop:weightedstar}, our main theorem of this section follows.
\begin{theorem}
	Let $G$ be a weighted graph with $n$ vertices plus the root $O$. The randomized search ratio $\rho$ of $G$ satisfies $\rho \le (n+1)/2$.
  \label{thm:upper.bound.randomized.general}
\end{theorem}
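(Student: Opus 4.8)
The plan is to reduce the general weighted graph to a star and then invoke Proposition~\ref{prop:weightedstar}. Given $G$, define the \emph{shortest-path star} $G^*$: it has the same root $O$ and one leaf for every non-root vertex $v \in \mathcal{V}-\{O\}$, joined to $O$ by an edge of length $d(v)$. Since every non-root vertex of $G$ is reached by a path using at least one edge and all edge lengths of $G$ are at least $1$, we have $d(v)\ge 1$, so $G^*$ is a legitimate weighted star on $n$ vertices plus the root, and Proposition~\ref{prop:weightedstar} gives $\rho(G^*)\le (n+1)/2$. It therefore suffices to prove $\rho(G)\le \rho(G^*)$, and I would do this by transferring an optimal randomized Searcher strategy from $G^*$ to $G$.

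A pure expanding search on the star $G^*$ is nothing more than an ordering $\pi=(v_1,\dots,v_n)$ of its $n$ leaves, under which the search time of $v_j$ is $\sum_{i\le j} d(v_i)$; consequently an optimal randomized search $q$ on $G^*$ is a probability distribution over such orderings, with $\hat{T}(q,v)\le \rho(G^*)$ for every non-root $v$ by optimality. For each ordering $\pi=(v_1,\dots,v_n)$ I would build an expanding search $S_\pi$ on $G$ that visits the vertices in the order $\pi$ as follows: maintain a subtree $T_j$ of $G$ rooted at $O$ containing $v_1,\dots,v_j$, and obtain $T_{j+1}$ from $T_j$ by adjoining the edges of a shortest $O$--$v_{j+1}$ path one at a time, discarding any edge whose addition would close a cycle. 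Then $T_{j+1}$ is again a subtree rooted at $O$, it contains $v_{j+1}$, and $\lambda(T_{j+1})\le \lambda(T_j)+d(v_{j+1})$; hence by induction $\lambda(T_j)\le \sum_{i\le j} d(v_i)$. Listing the edges of $T_1$, then the new edges of $T_2$, and so on (each block ordered so that every prefix is connected to $O$) yields a valid expanding search $S_\pi$ with $T(S_\pi,v_j)\le \lambda(T_j)\le \sum_{i\le j} d(v_i)$, which is exactly the search time of $v_j$ under $\pi$ in $G^*$.

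Finally, let $s$ be the randomized search of $G$ that draws $\pi$ from the optimal star distribution $q$ and then executes $S_\pi$. By the previous paragraph, for every non-root vertex $v$ of $G$ we get $T(s,v)\le T(q,v)$ and hence $\hat{T}(s,v)\le \hat{T}(q,v)\le \rho(G^*)\le (n+1)/2$. Therefore $\rho(G)\le \rho_s(G)\le (n+1)/2$, as claimed.

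The one place that needs genuine care is the construction of $S_\pi$ in the middle paragraph: one must verify both that every prefix of the constructed edge sequence is a subtree of $G$ rooted at $O$, so that $S_\pi$ is a bona fide expanding search in the sense of Definition~\ref{def:discrete-exp}, and that the cycle-discarding step preserves the bound $\lambda(T_j)\le \sum_{i\le j} d(v_i)$ (which holds because only a subset of the shortest-path edges is actually added). Both points are routine; everything else is a direct application of Proposition~\ref{prop:weightedstar} together with the observation that a pure search on a star is just an ordering of its edges.
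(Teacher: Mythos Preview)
Your proposal is correct and follows essentially the same approach as the paper: construct the shortest-path star $G^*$, invoke Proposition~\ref{prop:weightedstar}, and then transfer an optimal randomized search from $G^*$ to $G$ by converting each ordering $\pi$ into an expanding search $S_\pi$ on $G$ whose search time at each $v_j$ is bounded by $\sum_{i\le j} d(v_i)$. The only cosmetic difference is in the construction of $S_\pi$: the paper appends, at each step, a shortest path from the already-visited region to the next vertex, whereas you append a shortest $O$--$v_{j+1}$ path and discard cycle-closing edges; both yield the same inequality $\lambda(T_j)\le \sum_{i\le j} d(v_i)$ and the same overall argument.
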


\begin{proof}
Let $G'$ be the star with the same vertex set as $G$, such that for any non-root vertex $v$ in $G'$, the length of the edge with endpoints $O$ and $v$ in $G'$ is equal to the distance of the shortest path from $O$ to $v$ in $G$. This ensures that the distance from the root to $v$ is the same in $G$ and $G'$. By Proposition~\ref{prop:weightedstar}, there is a randomized search $s'$ of $G'$ with randomized search ratio no more than $(n+1)/2$. We can then construct a randomized search of $G'$ such that the expected search time of every vertex in $G$ under $s$ is no more than its expected search time in $G'$ under $s'$, which is sufficient to prove the theorem.

In order to construct $s$ we will replace every deterministic search $S'$ given positive weight in $s'$ with a deterministic search $S$ of $G$ such that the search time of every vertex in $G$ under $S$ is no more than its search time in $G'$ under $S'$. Indeed, suppose such a search $S'$ visits the vertices of $G'$ in some order $v_1,\ldots,v_n$, and we construct $S$ recursively as follows. Start by setting $S$ to be empty, then while $S$ has not yet visited all the nodes of $G$, we choose the minimal $i$ such that $v_i$ has not yet been visited and add to the end of $S$ a shortest path to $v_i$ from the region of $G$ that $S$ has already visited. It is easy to see that $S$ has the desired property.
\end{proof}
\smallskip

Proposition~\ref{prop:weightedstar} can be proved in another more direct way, without resorting to the result of \cite{Condon:2009:ADA:1497290.1497300}. Namely, we can explicitly construct a randomized search strategy on a star that has search ratio at most $(n+1)/2$. We include a description of this method of proving the proposition because it is based on an intuitive strategy which may be useful in future work.

For a given star graph $G$ we inductively define a randomized search strategy $s_k$ on the star graph $G_k$ consisting of only the edges $e_1,\ldots,e_k$ with total length $\mu_k$. Having defined the strategy $s_k$, we will define $s_{k+1}$ as a randomized mix of two strategies, $s_{k+1}^+$ and $s_{k+1}^-$, which we define in Definition~\ref{def:star-strat}. The former strategy, $s_{k+1}^+$ searches the new edge $e_{k+1}$ after searching the other edges, and works well in the case that the length of $e_{k+1}$ is a lot larger than the previous edges. If $e_{k+1}$ is not too large, then it is better to search it at some random point in the middle of $s_{k}$, which corresponds to the latter strategy, $s_{k+1}^-$.

\begin{definition} \label{def:star-strat} Suppose $s_k$ has been defined for some $k=1,\ldots,n-1$. Let $s_{k+1}^+$ and $s_{k+1}^-$ be randomized search strategies on $G_{k+1}$ defined by:
	\begin{enumerate}
		\item[(i)] $s_{k+1}^+$: follow the strategy $s_k$ on $G_k$ and then search edge $e_{k+1}$.
		\item[(ii)] $s_{k+1}^-$: choose a time $t$ uniformly at random in $[0,\mu_k]$ and denote the edge that is being searched at time $t$ by $e$. Follow the strategy $s_k$, but search edge $e_{k+1}$ immediately before searching $e$.
	\end{enumerate}
\end{definition}
\smallskip

Before giving the precise definition of $s_k$, we estimate the normalized expected search times $\hat{T}(s_{k+1}^+,v_i)$ and $\hat{T}(s_{k+1}^-,v_i)$ in terms of $\rho_{s_k}$ for the vertices $v_i$ with $i=1,\ldots,k+1$.

First suppose $i \le k$. Then clearly $\hat{T}(s_{k+1}^+,v_i) \le \rho_{s_k}$ (with equality for some $i \le k$) by definition of $\rho_{s_k}$. Under $s_{k+1}^-$, with probability $T(s_k,v_i)/\mu_k$ edge $e_{k+1}$ is searched before $e_i$, so the expected search time of $v_i$ is $T(s_k,v_i)+(T(s_k,v_i)/\mu_k)d_{k+1}$. Hence
\begin{align*}
\hat{T}(s_{k+1}^-,v_i) &= \frac{T(s_k,v_i)+(T(s_k,v_i)/\mu_k)d_{k+1}}{d_i} \\
&= \hat{T}(s_k,v_i)(1+d_{k+1}/\mu_k) \\
& \le \rho_{s_k}(1+d_{k+1}/\mu_k).
\end{align*}
Now suppose $i=k+1$. Under $s_{k+1}^+$, the time taken to find the Hider is $\mu_k+d_{k+1}$, so $\hat{T}(s_{k+1}^+,v_{k+1}) = \mu_k/d_{k+1}+1$. Under $s_{k+1}^-$, the expected search time is $\mu_k/2+d_{k+1}$ minus a random correction error which depends upon which edge $e$ is being searched under $s_k$ at the random time $t$ chosen uniformly in
$[0,\mu_k]$. The edge $e$ is $e_i$ with probability $d_i/\mu_k$, and in this case the expected value of the correction error is $d_i/2$. Hence the expected value of this correction error is $\sum_{i=1}^k (d_i/\mu_k)\cdot(d_i/2) = D_k/(2\mu_k)$. So we have
\begin{align*}
\hat{T}(s_{k+1}^-,v_{k+1}) &= \frac{\mu_k/2 + d_{k+1} - D_k/(2\mu_k)}{d_{k+1}} \\
&= \mu_k/(2d_{k+1}) + 1 - D_k/(2\mu_k d_{k+1}).
\end{align*}

To sum up, the expected search ratio for each combination of strategies can be bounded above by the payoffs in Table \ref{tab:game1}. We can now proceed to define $s_n$.

\begin{table}[htb!]
	\caption{Maximum value of $\hat{T}(s,v)$.}
	\label{tab:game1}
	\begin{center}
		\begin{tabular}{|l|c|c|}
			\hline
			{\bf Search} & \multicolumn{2}{c|}{{\bf Vertex, $v$}} \\
			{\bf strategy, $s$} & $v_i$ for some $i \le k$ & $v_{k+1}$\\ \hline
			$s_{k+1}^+$ & $\rho_{s_k}$ & $\mu_k/d_{k+1}+1$ \\ \hline
			$s_{k+1}^-$ & $\rho_{s_k}(1+d_{k+1}/\mu_k)$ & $\mu_k/(2d_{k+1}) + 1 - D_k/(2\mu_k d_{k+1})$ \\
			\hline
		\end{tabular}
	\end{center}
\end{table}

\begin{definition} Let $s_1$ be the only strategy available on $G_1$. Suppose $s_k$ has already been defined on $G_k$ for some $k=1,\ldots,n-1$. The strategy $s_{k+1}$ is an optimal mixture of $s_{k+1}^+$ and $s_{k+1}^-$ in the zero-sum game with payoff matrix given by Table \ref{tab:game1}.
\end{definition}

The search ratio of $s_n$ can be calculated recursively, since the search ratio $\rho_{s_{k+1}}$ of $s_{k+1}$ is at most the value of the game with payoff matrix given by Table \ref{tab:game1}, for each $k=1,\ldots,n-1$. We use this to show that $\rho_{s_n} \le (n+1)/2$.
\begin{theorem}
	\label{thm:star-bound}
	The randomized search ratio $\rho$ of star graph $G$ with $n$ edges is at most $(n+1)/2$, with equality if and only if all the edges have the same length.
\end{theorem}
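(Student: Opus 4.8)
The plan is to prove $\rho\le\rho_{s_n}\le(n+1)/2$ by induction on $n$ along the recursive construction of $s_n$, and then to deduce the equality characterisation from the exact formula of Theorem~\ref{thm:stars}. At the outset I would index the edges so that $d_1\le\dots\le d_n$; this is harmless (relabelling does not change $G$, and we may build $s_n$ in any order of edges), and it ensures that the edge $e_{k+1}$ added at step $k{+}1$ satisfies $d_{k+1}\ge\mu_k/k$, i.e.\ it is at least as long as the average of the edges already present --- a fact needed repeatedly below.

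The base case $n=1$ is trivial: $s_1$ is the only strategy on a single edge and $\rho_{s_1}=1=(1+1)/2$. For the inductive step, assume $\rho_{s_k}\le(k+1)/2$. Since $s_{k+1}$ is an optimal mixture in the zero-sum game of Table~\ref{tab:game1}, we have $\rho_{s_{k+1}}\le V_{k+1}$, the value of that $2\times2$ game, so it suffices to show $V_{k+1}\le(k+2)/2$ by exhibiting a Searcher mixture $q\,s_{k+1}^{+}+(1-q)\,s_{k+1}^{-}$. Write $\mu=\mu_k$, $d=d_{k+1}$, $D=D_k$. From the table the payoff against any $v_i$ with $i\le k$ is $f_1(q)=\rho_{s_k}(1+(1-q)d/\mu)$, decreasing in $q$, and the payoff against $v_{k+1}$ is $f_2(q)$, increasing in $q$, with $f_2(0)=1+(\mu^2-D)/(2\mu d)$ and $f_2(1)=1+\mu/d$. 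If $d\ge 2\mu/k$, take $q=1$: then $f_2(1)=1+\mu/d\le 1+k/2=(k+2)/2$ and $f_1(1)=\rho_{s_k}\le(k+1)/2$, and we are done. Otherwise $d<2\mu/k$, and I would take $q=\max\{0,q_1\}$, where $q_1$ is the value with $f_1(q_1)=(k+2)/2$. Using $\rho_{s_k}\le(k+1)/2$ one checks $q_1<1$, so $q\in[0,1)$ and $f_1(q)\le(k+2)/2$ automatically; moreover $d\ge\mu/k$ together with the Cauchy--Schwarz bound $D\ge\mu^2/k$ gives $f_2(0)\le(k+1)/2$, so $q=0$ is safe when $q_1\le 0$. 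It remains to verify, in the case $q=q_1>0$, that $f_2(q_1)\le(k+2)/2$, which since $f_2$ is increasing reduces to $q_1\le q_2$ where $f_2(q_2)=(k+2)/2$. Substituting $\rho_{s_k}\le(k+1)/2$ into the expression for $q_1$ gives $q_1\le 1-\mu/((k+1)d)$, and computing $q_2=(k\mu d-\mu^2+D)/(\mu^2+D)$ and clearing denominators turns $q_1\le q_2$ into $(k+1)\,d\,(2\mu-kd)\le\mu^2+D$; applying $D\ge\mu^2/k$ once more collapses this to $0\le(\mu-kd)^2$, which holds. This closes the induction, so $\rho\le\rho_{s_n}\le(n+1)/2$.

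For the equality case, Theorem~\ref{thm:stars} gives $\rho(G)=\max_{1\le k\le n}\bigl(\tfrac12+(\sum_{i\le k}c_i)^2/(2\sum_{i\le k}c_i^2)\bigr)$, and Cauchy--Schwarz gives $(\sum_{i\le k}c_i)^2\le k\sum_{i\le k}c_i^2$ with equality iff $c_1=\dots=c_k$. Hence every term is at most $\tfrac12+k/2$, and if $\rho(G)=(n+1)/2$ then (the terms with $k<n$ being at most $n/2$) the $k=n$ term must equal $\tfrac{n+1}{2}$, forcing all edge lengths equal. Conversely, if all $n$ edges have the same length, searching them in a uniformly random order gives every non-root vertex expected normalised search time exactly $(n+1)/2$, so $\rho(G)=(n+1)/2$. (The same dichotomy can also be read off from within the construction: equality in $D_k\ge\mu_k^2/k$ at every level forces all lengths equal, and in the uniform case the optimal mixture is $q=1/(k+1)$ with game value exactly $(k+2)/2$.)

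I expect the only genuine obstacle to be the algebra of the mixing step, and the crucial point is that, after feeding in the inductive hypothesis and Cauchy--Schwarz, the condition $q_1\le q_2$ becomes the perfect square $(\mu-kd)^2\ge 0$ --- so the induction closes tightly with no slack. One should also not skip the edge-ordering reduction: without $d_1\le\dots\le d_n$ the literal strategy $s_n$ can fail (a short edge appended after long ones drives the $v_{k+1}$-payoffs, hence $V_{k+1}$, above $(k+2)/2$), so the argument genuinely relies on processing the edges in non-decreasing order of length.
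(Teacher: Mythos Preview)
Your argument for the inequality $\rho_{s_n}\le(n+1)/2$ is correct and follows essentially the paper's route: the same induction along the recursive construction of $s_k$, the same case split at $d_{k+1}=2\mu_k/k$, the same two key bounds $\rho_{s_k}\le(k+1)/2$ and $D_k\ge\mu_k^2/k$ (Cauchy--Schwarz), and the same need for the ordering $d_1\le\dots\le d_n$ to secure $d_{k+1}\ge\mu_k/k$. The only difference in execution is that the paper substitutes the two bounds into Table~\ref{tab:game1} to obtain Table~\ref{tab:game2}, verifies that neither player has a dominant strategy there, and then solves that $2\times2$ game exactly, obtaining the closed form
\[
V \;=\; \frac{k}{2}+1-\frac{\tfrac{k}{2}\bigl(d_{k+1}/\mu_k-1/k\bigr)^2}{(d_{k+1}/\mu_k)^2+1/k},
\]
whereas you bypass the exact solution by exhibiting a specific (not necessarily optimal) Searcher mixture that already meets the target $(k+2)/2$. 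Both computations collapse to the same perfect-square identity $(\mu_k-k\,d_{k+1})^2\ge0$.

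For the equality characterisation the two arguments diverge. The paper reads it straight off the closed form above: $V=(k+2)/2$ forces $d_{k+1}=\mu_k/k$, hence by induction all edge lengths coincide, and no appeal to Theorem~\ref{thm:stars} is needed. Your primary argument uses Theorem~\ref{thm:stars} (Condon et al.) together with Cauchy--Schwarz; this is short and entirely valid, but note that the stated purpose of Theorem~\ref{thm:star-bound} in the paper is precisely to give a proof \emph{independent} of Condon et al., so only your parenthetical remark matches that intent. If you want a self-contained equality proof from within your induction, you would need to track strictness: when $q=q_1>0$ you have $f_1(q_1)=(k+2)/2$ exactly, so $\rho_{s_{k+1}}=(k+2)/2$ forces $q_1=q_2$, i.e.\ equality in $(\mu_k-kd_{k+1})^2\ge0$, giving $d_{k+1}=\mu_k/k$; combined with $d_{k+1}\ge d_i$ for $i\le k$ this yields $d_1=\dots=d_{k+1}$.
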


\begin{proof}
	We have already pointed out that $\rho = (n+1)/2$ for the star whose edges all have the same length.
	To show that $\rho \le (n+1)/2$ we use induction on the number of edges to show that $\rho_{s_n} \le (n+1)/2$. It is clear that for $k=1$, we have $\rho_{s_k}=1 = (k+1)/2$, so assume that $\rho(s_k) \le (k+1)/2$ for some $k>1$ and we will show that $\rho_{k+1} \le (k+2)/2 = k/2+1$.
	
	First observe that if $d_{k+1} \ge 2\mu_k/k$ then the Searcher can ensure a payoff of no more than $k/2+1$ in the game in Table \ref{tab:game1} just by using strategy $s_{k+1}^+$. This is because the payoff $\rho_{s_k}$ against a vertex $v_i$ with $i \le k$ is no more than $(k+1)/2$ by the induction hypothesis and the payoff against $v_{k+1}$ is $\mu_k/d_{k+1}+1 \le k/2$.
	
	So assume that $d_{k+1} \le 2\mu_k/k$, and note also that $d_{k+1} \ge \mu_k/k$, since the lengths of the edges are non-decreasing and $d_{k+1}$ must be at least the average length of edges $e_1,\ldots,e_k$.
	
	By the induction hypothesis, $\rho_{s_k} \le (k+1)/2$, so the value of the game with payoff matrix given by Table \ref{tab:game1} cannot decrease if we replace $\rho_{s_k}$ with $(k+1)/2$ in the table. The value also does not decrease if we replace $-D_k$ by the maximum value it can take, which is $-\mu_k^2/k$ (that is, its value when $d_1,\ldots,d_k$ are all equal). In summary, $\rho_{s_{k+1}}$ is no more than the value of the game given in Table \ref{tab:game2}.
	
	\begin{table}[htb!]
		\begin{center}
			\caption{Upper bounds for $\hat{T}(s,v)$.}
			\begin{tabular}{|l|c|c|}
				\hline
				{\bf Search} & \multicolumn{2}{c|}{{\bf Vertex, $v$}} \\
				{\bf strategy, $s$} & $v_i$ for some $i \le k$ & $v_{k+1}$\\ \hline
				$s_{k+1}^+$ & $(k+1)/2$ & $\mu_k/d_{k+1}+1$ \\ \hline
				$s_{k+1}^-$ & $(k+1)(1+d_{k+1}/\mu_k)/2$ & $\mu_k/(2d_{k+1}) + 1 - \mu_k/(2kd_{k+1})$ \\
				\hline
			\end{tabular}
			\label{tab:game2}
		\end{center}
	\end{table}
	
	By assumption, against strategy $s_{k+1}^+$, the best response of the Hider (that is, the highest payoff) is given by choosing vertex $v_{k+1}$. We show that against strategy $s_{k+1}^-$, the Hider's best response is to choose a vertex $v_i$ with $i \le k$. This follows from writing the difference, $\delta$ between the payoffs in entries $(2,1)$ and $(2,2)$ of Table \ref{tab:game2} as
	\[
	\delta=(k-1)\frac{\mu_k}{2d_{k+1}}\left(\left(\frac{k+1}{k-1}\right)\left(\frac{d_{k+1}}{\mu_k}\right)^2 + \frac{d_{k+1}}{\mu_k} - 1/k\right).
	\]
	The quadratic in $(d_{k+1}/\mu_k)$ inside the parentheses is increasing for positive values of $d_{k+1}/\mu_k$, and when $d_{k+1}/\mu_k = 1/k$ the quadratic is positive. Since $d_{k+1}/\mu_k \ge 1/k$, we must have $\delta \ge 0$.
	
	Hence the Hider does not have a dominating strategy in the game in Table \ref{tab:game2}. It is also clear that the Searcher does not have a dominating strategy, since it is better to search $e_{k+1}$ last if and only if the Hider is at some $v_{i}$ with $i \le k$. Therefore the game in Table \ref{tab:game2} has a unique equilibrium in proper mixed strategies (that is, the players both play each of their strategies with positive probability). The search ratio $\rho_{s_{k+1}}$ of $s_{k+1}$ is bounded above by the value $V$ of the game, which is easily verified to be
	\[
	V = k/2 + 1 - \frac{\frac{k}{2}(d_{k+1}/\mu_k-1/k)^2}{(d_{k+1}/\mu_k)^2+1/k}.
	\]
	This is clearly at most $k/2+1$, with equality if and only if $\mu_k/d_{k+1} = k$. The theorem follows by induction, and equality is only possible if $d_1=d_2=\ldots=d_{n}$. 
\end{proof}

\section{Conclusion}
We have undertaken an analysis of expanding search, as defined by \cite{AL:expanding}, focusing on the search ratio,
as introduced by \cite{koutsoupias:fixed} in the context of pathwise search in bounded domains.
In contrast to \cite{AL:expanding}, we have focused on computational and algorithmic issues of expanding search, an angle that is often
neglected in the analysis of search games. For general graphs, we showed that computing the search ratio is NP-hard, and we gave a $4\ln(4)$
approximation. Our main technical contribution is defining and analyzing explicit randomized search strategies that yield
significant improvements to the approximation of the randomized search ratio of trees and unweighted graphs (namely, an approximation equal to 5/4).

We believe that some of the techniques we introduced in this work can be applicable in the context of pathwise search. For instance,
we believe that a variants of the randomized strategy presented in Section~\ref{sec:trees} will result in improved randomized search
ratios for pathwise search in weighted trees.

We leave some open questions which we would like to see addressed by future work. Although we have showed that computing the search ratio of a
graph is NP-hard, we do not have an equivalent result for computing the randomized search ratio (though we suspect such a result holds).
It would be very interesting to improve upon the approximations of the search ratio and randomized search ratio for general weighted graphs;
the latter, in particular, appears to be quite a difficult problem that we believe will require the introduction of new techniques and approaches.
Another direction for future work is related to the {\em continuous} model. In this model, the Hider may be located not only on a graph vertex, but
on any given point across an edge. Optimal strategies that minimize the deterministic search ratio are relatively
easy to obtain (see \cite{stacs-expanding}); in contrast, it is quite challenging to obtain strategies that improve upon the straightforward approximations of the randomized search ratio in this model.

Last, we note that the work that introduced the expanding search paradigm (\cite{AL:expanding}) raises several interesting optimization problems
concerning the average search time of vertices of a graph (assuming expanding search). In particular, one can define the {\em expanding minimum latency problem},
as the problem of minimizing the total latency of a graph, assuming an expanding search of the graph.
Is this problem NP-hard in general graphs?
If yes, can one obtain constant-factor approximations? Is the general problem in the setting in which the search time of a vertex
is weighted as hard as the unweighted variant?
Answers to the above questions will help provide an almost-complete picture of the computability and approximability of expanding search
across a variety of performance measures.

\section*{Acknowledgements}

The authors would like to thank Lisa Hellerstein for pointing out the connection between this work and the work in \cite{Condon:2009:ADA:1497290.1497300} on pipelined filter ordering.

Research supported in part by project
ANR-11-BS02-0015 ``New Techniques in Online Computation--NeTOC'', and by the FMJH program Gaspard Monge in optimization and operations research
and by EDF.

\end{document}